\newtheorem{theorem}{Theorem}[section]
\newtheorem{proposition}[theorem]{Proposition}
\newtheorem{lemma}[theorem]{Lemma}
\theoremstyle{definition}
\newtheorem{remark}[theorem]{Remark}
\newtheorem{notation}[theorem]{Notation}
\def\R{\mathbb{R}}
\def\dH{\text{\tt {\bf d}}_\text{\tt {\bf H}}}
\def\Op{\mathfrak{Op}}
\def\z{\mathfrak{z}}
\def\Rd{\mathbb{R}^d}
\def\Z{\mathbb{Z}}
\def\bb1{{\rm{1}\hspace{-3pt}\mathbf{l}}}
\def\supp{\text{\sf supp}}
\def\dist{\text{\sf dist}}
\def\mW{\mathfrak{W}}
\def\mz{\mathring{z}}
\def\beq{\begin{equation}}
	\def\eeq{\end{equation}}
\numberwithin{equation}{section}
\title{Spectral regularity with respect to dilations for a class of pseudodifferential operators.}
\author[H. D. Cornean, R, Purice]{Horia D. Cornean \and R. Purice}
\begin{document}

{\begin{abstract} {
We continue the study of the perturbation problem discussed in \cite{CP3} and get rid of the 'slow variation' assumption by considering symbols of the form $a\big(x+\delta\,F(x),\xi\big)$ with $a$ a real H\"{o}rmander symbol of class $S^0_{0,0}(\Rd\times\Rd)$ and $F$ a smooth function with all its derivatives globally bounded, with $|\delta|\leq1$. We prove that while the Hausdorff distance between the spectra of the Weyl quantization of the above symbols in a neighbourhood of $\delta=0$ is still of the order $\sqrt{|\delta|}$, the distance between their spectral edges behaves like $|\delta|^\nu$ with $\nu\in[1/2,1)$ depending on the rate of decay of the second derivatives of $F$ at infinity.}
\end{abstract}
}
\maketitle

\section{Introduction}

In \cite{CP3} we {considered the regularity of} {\  investigated how }the spectrum of a class of $\Psi$DO {\  changes (seen as a subset of the real line)} with respect to a family of slowly varying dilation-type perturbations. {\  More precisely, we worked with symbols of the form $a\big(x+F(\delta \, x),\xi\big)$ with $a$ a real H\"{o}rmander symbol of class $S^0_{0,0}(\Rd\times\Rd)$ and $F$ a smooth function with all its derivatives globally bounded, and with $0<|\delta|\leq 1$.  The motivation came from a related problem discussed in \cite{GRS} in which $F$ was an affine function.} In this note we present some results that may be obtained when one eliminates the 'slow variation' hypothesis {\  and the perturbed symbols are of the form $a\big(x+\delta\,F(x),\xi\big)$ instead of $a\big(x+F(\delta \, x),\xi\big)$. We note that when $F$ is affine as in \cite{GRS}, the two problems are essentially the same}.

We shall use the multi-index conventions of \cite{H-3}. Let:
\begin{align}\label{dc1}
	\nu_{n,m}(a)\,:=\,\underset{|\alpha|\leq n}{\max}\,\underset{|\beta|\leq m}{\max}\,\sup_{(x,\xi) \in \R^{2d}}|\partial_x^\alpha\partial_\xi^\beta a|,\quad \forall (n,m)\in \mathbb{N}\times\mathbb{N}
\end{align}
and $S^0_{0,0}(\R^{d}\times\R^d)$ \cite{H-3} the set of smooth functions  satisfying
\begin{align}\label{dc1-1}
	\nu_{n,m}(a)\,<\infty,\quad \forall (n,m)\in \mathbb{N}\times\mathbb{N}.
\end{align}
We shall consider some real H\"ormander symbol $a(x,\xi)$ of class $S^0_{0,0}(\R^{d}\times\R^d)$.

We shall denote by $\big(\cdot,\cdot\big)_{L^2(\R^d)}$ the scalar product in $L^2(\R^d)$ (considered antilinear in the first variable), with the quadratic norm denoted simply by $\|\cdot\|$ and we shall use the notation $\big\langle\cdot,\cdot\big\rangle_{\mathscr{S}(\mathcal{V})}:\mathscr{S}^\prime(\mathcal{V})\times\mathscr{S}(\mathcal{V})\rightarrow\mathbb{C}$ for the canonical bilinear duality map for tempered distributions on the real finite dimensional Euclidean space $\mathcal{V}$.

Following \cite{H-3} we define the Weyl quantization of the symbol $a\in\,S^0_{0,0}(\R^{d}\times\R^d)$ as the operator:
\beq\label{DF-Op-w-a}
\big(\Op^w(a)\varphi\big)(x)=(2\pi)^{-d}\int_{\R^d}dy\int_{\R^d}d\eta\,e^{i<\eta,x-y>}\,a\big((x+y)/2,\eta\big)\,\varphi(y),\quad\forall\varphi\in\mathscr{S}(\R^d),\ \forall x\in\R^d.
\eeq
Due to the Calderon-Vaillancourt Theorem (see \cite{CV} and \S XIII.1 in \cite{T}) this operator is bounded in $L^2(\R^d)$ with the following bound on the operator norm:
\beq\label{F-CV-est}
\big\|\Op^w(a)\varphi\big\|_{\mathbb{B}(L^2(\R^d))}\,\leq\,C\,\nu_{3d+4,3d+4}(a).
\eeq 
We shall use the same notation for its extension to the entire Hilbert space. Let $\mathfrak{K}[a]\in\mathscr{S}^\prime(\R^d\times\R^d)$ be the distribution kernel of $\Op^w(a)$ (see \cite{Schw}); it may be computed by the following formula: 
\beq
\mathfrak{K}[a]:=(2\pi)^{-d/2}\big((\bb1\otimes\mathcal{F}^-)a\big)\circ\Upsilon
\eeq
where $\Upsilon:\R^d\times\R^d\ni(x,y)\mapsto\big((x+y)/2,x-y\big)\in\R^d\times\R^d$  is a bijection with Jacobian -1 and:
\beq
\big(\mathcal{F}^-\varphi\big)(v)\,:=\,(2\pi)^{-d/2}\int_{\R^d}d\xi\,e^{i<\xi,v>}\,\varphi(\xi),\quad\forall\varphi\in\mathscr{S}(\R^d),\ \forall v\in\R^d
\eeq
is the inverse Fourier transform.
We also define the distribution $\widetilde{\mathfrak{K}}[a]:=\mathfrak{K}[a]\circ\Upsilon^{-1}\in\mathscr{S}^\prime(\R^d\times\R^d)$.
With a slight abuse, we can write the following explicit formula:
\beq
\mathfrak{K}[a](z+v/2,z-v/2)\equiv\widetilde{\mathfrak{K}}[a](z,v):=(2\pi)^{-d}\int_{\R^d}d\eta\,e^{i<\eta,v>}\,a(z,\eta).
\eeq
\begin{proposition}\label{P-distr-ker} (see \cite{H-3}) 
The tempered distribution $\widetilde{\mathfrak{K}}[a]\in\mathscr{S}^\prime(\R^d\times\R^d)$ is in fact a smooth (with respect to the weak topology) distribution valued function $\Rd\ni\,z\mapsto\widetilde{\mathfrak{K}}[a](z,\cdot)\in\mathscr{S}^\prime(\R^d_v)$ such that for any $z\in\R^d$ the distribution $\widetilde{\mathfrak{K}}[a](z,\cdot)\in\mathscr{S}^\prime(\R^d_v)$ has singular support contained in $\{v=0\}$ (possibly void) and rapid decay together with all its derivatives, in the complement of $v=0$.
\end{proposition}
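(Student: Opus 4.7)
The plan is to regard $\widetilde{\mathfrak{K}}[a](z,\cdot)$ as the partial inverse Fourier transform in $\eta$ of $a(z,\cdot)$, viewed as a tempered distribution in $v$ parametrized by $z$. Two ingredients need to be checked: first, the weak smoothness of the map $z\mapsto \widetilde{\mathfrak{K}}[a](z,\cdot)$ into $\mathscr{S}^\prime(\R^d_v)$; second, for each fixed $z$, the singular-support and decay statements in $v$.

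For the smoothness in $z$, I would test against an arbitrary $\vfi\in\mathscr{S}(\R^d_v)$ and use Fourier duality to rewrite the pairing as
\begin{equation*}
\big\langle \widetilde{\mathfrak{K}}[a](z,\cdot), \vfi\big\rangle_{\mathscr{S}(\R^d_v)} \,=\, (2\pi)^{-d/2}\int_{\R^d} a(z,\eta)\,(\mathcal{F}^-\vfi)(\eta)\,d\eta.
\end{equation*}
Since $a\in S^0_{0,0}$ has uniformly bounded $z$-derivatives of all orders (controlled by $\nu_{n,0}(a)$) and $\mathcal{F}^-\vfi\in\mathscr{S}(\R^d)\subset L^1(\R^d)$, iterated differentiation under the integral sign is justified by dominated convergence and yields a $C^\infty$ dependence of the pairing on $z$ for every test $\vfi$, which is precisely the required weak smoothness.

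For the singular-support and rapid-decay statement, fix $v\neq 0$ and use the standard oscillatory-integral trick: the operator $L_v:=(i|v|^2)^{-1}\,v\cdot\nabla_\eta$ satisfies $L_v\,e^{i\langle\eta,v\rangle}=e^{i\langle\eta,v\rangle}$, so formally integrating by parts $N$ times should yield
\begin{equation*}
\widetilde{\mathfrak{K}}[a](z,v)\,=\,(2\pi)^{-d}\int_{\R^d} e^{i\langle\eta,v\rangle}\,\big((L_v^{\,t})^N a(z,\cdot)\big)(\eta)\,d\eta,
\end{equation*}
where $(L_v^{\,t})^N$ is a linear combination of derivatives $\partial_\eta^\beta$, $|\beta|=N$, with coefficients bounded by a constant times $|v|^{-N}$. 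Because each $\partial_\eta^\beta a(z,\cdot)$ lies in $L^\infty$ (by $\nu_{0,|\beta|}(a)<\infty$), this gives the formal pointwise estimate $|\widetilde{\mathfrak{K}}[a](z,v)|\leq C_N |v|^{-N}$; the analogous argument applied after differentiating in $v$ (which converts derivatives into polynomial $\eta$-factors that one then absorbs by further integrations by parts) produces $|v|^N|\partial_v^\alpha\widetilde{\mathfrak{K}}[a](z,v)|\leq C_{N,\alpha}$ on every closed subset bounded away from $0$.

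The hard part will be making this oscillatory integral rigorous: because $a\in S^0_{0,0}$ has no decay in $\eta$, the integrand $(L_v^{\,t})^N a(z,\cdot)$ is only bounded, not integrable on $\R^d$. The standard remedy is to insert a cut-off $\chi_R(\eta)=\chi(\eta/R)$ with $\chi\in C_c^\infty(\R^d)$, $\chi(0)=1$, to perform the integration by parts on the regularized integrand (which is now compactly supported in $\eta$ and so absolutely integrable), and then to pass to the limit $R\to\infty$ in $\mathscr{S}^\prime(\R^d_v)$; terms where derivatives land on $\chi_R$ acquire negative powers of $R$ and vanish in the limit, while the dominant term admits the uniform bound above. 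Establishing convergence of the regularized integrals pointwise for $v\neq 0$, together with the required uniform estimates on closed subsets avoiding the origin, is the delicate technical step, and rests on the general theory of pseudodifferential-operator kernels developed in \cite{H-3}.
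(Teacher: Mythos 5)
The paper gives no proof of this proposition at all — it simply points to \cite{H-3} — so there is no ``paper's route'' to compare yours against. Your first part (weak $C^\infty$ dependence of $z\mapsto\widetilde{\mathfrak{K}}[a](z,\cdot)$) is correct and complete: once the pairing is rewritten as $(2\pi)^{-d/2}\int_{\R^d}a(z,\eta)\,(\mathcal{F}^-\vfi)(\eta)\,d\eta$, the bounds $\nu_{n,0}(a)<\infty$ together with $\mathcal{F}^-\vfi\in L^1(\R^d)$ justify differentiation under the integral to any order.

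The second part has a genuine gap, and one that cannot be closed as sketched. You correctly identify the obstruction: after $N$ integrations by parts the integrand $(L_v^{\,t})^N a(z,\cdot)$ is still only bounded, not integrable, because derivatives in $S^0_{0,0}$ gain no decay in $\eta$. Your proposed fix — insert $\chi_R(\eta)=\chi(\eta/R)$, integrate by parts, and let $R\to\infty$ — does not repair this: in the term where all $N$ derivatives fall on $a$, the integral $\int e^{i\langle\eta,v\rangle}\chi_R(\eta)\,\partial_\eta^\beta a(z,\eta)\,d\eta$ has no reason to converge pointwise in $v$ as $R\to\infty$ (precisely because $\partial_\eta^\beta a\notin L^1$); the limit exists only in $\mathscr{S}'(\R^d_v)$, returning you to the distribution you started from rather than to a pointwise bound. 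Likewise, the terms where $k$ derivatives hit $\chi_R$ carry a factor $R^{-k}$ but live on a shell of volume $\sim R^d$, so they do not automatically vanish. The oscillatory-integral argument you invoke genuinely requires decay of the $\eta$-derivatives ($\rho>0$), which is exactly what $S^0_{0,0}$ lacks.

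More seriously, the conclusion is false as literally stated at this level of generality, which is why the ``delicate technical step'' you flag cannot be made to work. Take $d=1$ and the real symbol $a(z,\eta)=\cos(\sin\eta)\in S^0_{0,0}$. By the Jacobi--Anger expansion, $\cos(\sin\eta)=\sum_{n\ \mathrm{even}}J_n(1)\,e^{in\eta}$, hence $\widetilde{\mathfrak{K}}[a](z,\cdot)=\sum_{n\ \mathrm{even}}J_n(1)\,\delta(\cdot+n)$ in $\mathscr{S}'(\R_v)$. Its singular support is the set of even integers at which $J_n(1)\neq 0$ (which includes $\pm 2,\pm 4,\dots$), not a subset of $\{0\}$, and it is certainly not a rapidly decaying smooth function on $\{v\neq 0\}$. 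So the singular-support and pointwise-decay claims cannot both hold for general $S^0_{0,0}$ symbols. A correct statement would have to either strengthen the symbol class or weaken the conclusion (e.g.\ to rapidly decaying bounds on pairings with test functions supported in $\{|v|\geq r\}$, in the spirit of what is actually used later in the paper). This is really a deficiency in the proposition itself rather than in your effort, but it should be flagged rather than delegated to \cite{H-3}.
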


\paragraph{$\boldsymbol{L^2(\R^d)}$-boundedness criterion.} Given a distribution kernel $\widetilde{\mathfrak{K}}[a]$ as in the above Proposition \ref{P-distr-ker} and using the operator-norm estimation in the Calderon-Vaillancourt Theorem for its associated H\"{o}rmander symbol $a=(\bb1\otimes\mathcal{F})\widetilde{\mathfrak{K}}[a]\in S^0_{0,0}(\Rd\times\Rd)$, our main criterion for $L^2(\R^d)$-boundedness of the associated linear operator in $L^2(\R^d)$ will be the boundedness {\  of at least one of the seminorms}:
\beq\label{Cr-Bd}
\nu_{n,m}\big((\bb1\otimes\mathcal{F})\widetilde{\mathfrak{K}}\big),\quad\min(n,m)\geq3d+4.
\eeq
\begin{notation}
We shall use the following notations:
\begin{itemize}
	\item $<v>:=\sqrt{1+|v|^2}$, for any $v\in\R^d$ and $\mathfrak{s}_p(v):=<v>^p$ for any $p\in\R$.
	\item $\tau_z$ for the translation with $-z\in\R^d$ on any space of functions or distributions on $\R^d$.
	\item $C^\infty_1(\R^d;\Rd)$ defined as the space of smooth $\R^d$-valued functions with bounded derivatives of all strictly positive orders.
\end{itemize}
\end{notation}

\paragraph{The Problem.}
Let $F\in C^\infty_1(\R^d;\Rd)$ and 
$\delta\in\R$ with $|\delta|\leq1$.
To any real-valued symbol $a\in S^0_{0,0}(\R^d\times\R^d)$ we associate the perturbed symbols:\vspace*{-0,3cm}
\beq\begin{split}\nonumber
	&a[F]_\delta(x,\xi)\,:=\,a\big(x+\delta\,F(x),\xi\big).
\end{split}\eeq
We are interested in the variation of the spectrum $\sigma\big(\Op^w(a[F]_\delta)\big)\subset\mathbb{R}$, as a set, when $\delta$ goes to 0.
\begin{remark}\label{R-est-aFdelta}
We evidently have the inequalities:
$$
\nu_{n,m}(a[F]_\delta)\,\leq\,C_n(\delta,F)\nu_{n,m}(a),\quad\forall(n,m)\in\mathbb{N}\times\mathbb{N},
$$
with $C_n(\delta,F)$ depending on the $\sup$-norm of the derivatives of $F$ up to order $n-1$, uniformly in $\delta\in(0,1]$.
\end{remark}
We shall use the short-hand notations (for $|\delta|\leq1$):
\begin{align}\label{F-intkerdelta}
&K_\delta:=\Op^w\big(a[F]_\delta\big)\in\mathbb{B}\big(L^2(\R^d)\big),\quad\mathfrak{K}_\delta:=\widetilde{\mathfrak{K}}\big[a[F]_\delta\big]\in\mathscr{S}^\prime(\R^d\times\R^d);\\\label{F-spedgedelta}
&\mathcal{E}_+(\delta):=\sup\,\sigma(K_\delta).
\end{align}
\textbf{The Hausdorff distance:} $\dH(A,B):=\max\Big\{\underset{t\in A}{\sup}\,\dist(t,B)\,,\,\underset{t\in B}{\sup}\,\dist(t,A)\Big\}$ for $A,B$ subsets of $\mathbb{C}$.

\section{The main results}

\begin{theorem}\label{T-A}
	There exists $C(a,F)>0$ such that for $|\delta|\leq1$ we have the estimation:
	$$
	\dH\Big(\sigma\big(K_\delta\big)\,,\,\sigma\big(K_0\big)\Big)\,\leq\,C(a,F)\sqrt{|\delta|}.
	$$
\end{theorem}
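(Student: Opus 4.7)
Since $a$ is real, $K_\delta$ and $K_0$ are bounded self-adjoint operators on $L^2(\R^d)$. The inequality $\dist(\lambda,\sigma(B))\leq\|(B-\lambda)\phi\|/\|\phi\|$ for any $\phi\neq 0$ reduces the theorem to showing that, for each $\lambda\in\sigma(K_\delta)$, there exists a nonzero $\phi\in L^2(\R^d)$ with $\|(K_0-\lambda)\phi\|\leq C\sqrt{|\delta|}\|\phi\|$ (and symmetrically for $\mu\in\sigma(K_0)$). I do \emph{not} expect a direct operator-norm bound $\|K_\delta-K_0\|\leq C\sqrt{|\delta|}$: the difference symbol $a(x+\delta F(x),\xi)-a(x,\xi)$ carries the possibly linearly-growing factor $F(x)$, so it lies in $S^0_{0,0}$ only with uniformly bounded (not $\sqrt{|\delta|}$-small) seminorms, and \eqref{F-CV-est} is too crude without a position-space localization.

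\textbf{Localization at scale $r:=|\delta|^{-1/2}$.} Take a partition of unity $\{\chi_j\}_{j\in\Z^d}$ on $\R^d$ with $\sum_j\chi_j^2=1$, $\supp(\chi_j)\subset B(z_j,2r)$, bounded overlap and $|\partial^\alpha\chi_j|\leq C_\alpha r^{-|\alpha|}$; take enlarged cutoffs $\widetilde\chi_j\in C^\infty_c(\R^d)$ equal to $1$ on $B(z_j,4r)$, supported in $B(z_j,8r)$, with the same derivative scaling. Set $c_j:=\delta F(z_j)$, $U_j:=T_{c_j}$, and $K_{\delta,j}:=U_j^*K_0U_j=\Op^w\bigl(a(\cdot+c_j,\cdot)\bigr)$, so that $\sigma(K_{\delta,j})=\sigma(K_0)$. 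Since $\nabla F\in L^\infty$, for $x\in\supp(\widetilde\chi_j)$ one has $|\delta F(x)-c_j|\leq C\sqrt{|\delta|}$, and an induction on the order of differentiation (Taylor expansion plus the chain rule, together with the fact that any derivative of $F$ of positive order is globally bounded) shows that the symbol $e_{\delta,j}(x,\xi):=a(x+\delta F(x),\xi)-a(x+c_j,\xi)$ satisfies $|\partial_x^\alpha\partial_\xi^\beta e_{\delta,j}|\leq C_{\alpha,\beta}\sqrt{|\delta|}$ on $\supp(\widetilde\chi_j)$. Hence $\widetilde\chi_j\,e_{\delta,j}\in S^0_{0,0}$ with all seminorms of order $\sqrt{|\delta|}$, yielding $\|\Op^w(\widetilde\chi_j\,e_{\delta,j})\|\leq C\sqrt{|\delta|}$ via \eqref{F-CV-est}. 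Combined with the pseudo-local rapid decay of the kernel of $\Op^w(e_{\delta,j})$ in $v=x-y$ inherited from Proposition~\ref{P-distr-ker}, this yields the local comparison
\[
\|\chi_j(K_\delta-K_{\delta,j})\phi\|^2\;\leq\;C|\delta|\,\|\widetilde\chi_j\phi\|^2+C_N|\delta|^N\|\mathbf{1}_{A_j}\phi\|^2,
\]
where $A_j$ is the annular region around $\supp(\chi_j)$ determined by the kernel-decay scale, the family $\{A_j\}$ having bounded overlap. An analogous Moyal-bracket computation produces the companion commutator bound $\|[K_{\delta,j},\chi_j]\phi\|^2\leq C|\delta|\,\|\widetilde\chi_j\phi\|^2+(\text{remainder of the same type})$.

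\textbf{IMS-style pigeonhole.} For $\lambda\in\sigma(K_\delta)$ and any $\eta>0$, pick $\phi$ with $\|\phi\|=1$ and $\|(K_\delta-\lambda)\phi\|<\eta$ (Weyl's criterion). From the identity
$(K_{\delta,j}-\lambda)\chi_j\phi=\chi_j(K_\delta-\lambda)\phi+\chi_j(K_{\delta,j}-K_\delta)\phi+[K_{\delta,j},\chi_j]\phi$,
squaring, summing over $j$, and using $\sum_j\chi_j^2=1$ together with the bounded overlap of $\{\widetilde\chi_j\}$ and $\{A_j\}$,
\[
\sum_j \|(K_{\delta,j}-\lambda)\chi_j\phi\|^2\;\leq\;3\eta^2+C'|\delta|.
\]
Since $\sum_j\|\chi_j\phi\|^2=1>0$, pigeonhole yields some $j_0$ with $\chi_{j_0}\phi\neq 0$ and $\|(K_{\delta,j_0}-\lambda)\chi_{j_0}\phi\|\leq\sqrt{2(3\eta^2+C'|\delta|)}\,\|\chi_{j_0}\phi\|$. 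Using $\sigma(K_{\delta,j_0})=\sigma(K_0)$ and letting $\eta\to 0$ produces $\dist(\lambda,\sigma(K_0))\leq\sqrt{2C'|\delta|}$. The reverse direction, starting from $\mu\in\sigma(K_0)$, is symmetric: the unitary equivalence $K_{\delta,j}\sim K_0$ transfers a Weyl sequence for $K_0$ at $\mu$ into one for $K_{\delta,j}$, which is then localized and compared to $K_\delta$ by the same estimates.

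\textbf{Main obstacle.} The critical technical step is the local Calder\'on--Vaillancourt estimate together with the pseudo-local remainder bookkeeping: one must control Moyal compositions of the scale-$r$ cutoffs $\chi_j,\widetilde\chi_j$ against $S^0_{0,0}$-symbols (which carry no $\xi$-decay), ensuring that each subleading term in the Moyal expansion contributes an extra factor $1/r=\sqrt{|\delta|}$ within the overall budget, and that the per-$j$ pseudo-local remainders are coupled to localized $L^2$-masses $\|\mathbf{1}_{A_j}\phi\|^2$ (bounded overlap) rather than to $\|\phi\|^2$, which would cause the $\sum_j$ to diverge. This is where the scale $r=|\delta|^{-1/2}$ appears naturally as the balance between the variation of $F$ (producing the $\sqrt{|\delta|}$ local symbol error) and the derivatives of the cutoffs (producing the $1/r=\sqrt{|\delta|}$ commutator error).
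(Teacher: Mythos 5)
The proposal takes a genuinely different route from the paper while sharing the two core mechanisms. The paper builds a quasi-resolvent $\widetilde T(\z;\delta)=\sum_\gamma G[F_\delta]_\gamma\,\tau_{-z_\gamma}\,(K_0-\z)^{-1}\tau_{z_\gamma}\,G[F_\delta]_\gamma$ from translated resolvents of $K_0$, where the cutoffs $g[F_\delta]_\gamma(z)=g\big(\delta^{1-\kappa}F(z)-\gamma\big)$ localize in the \emph{range} of $\delta^{1-\kappa}F$; it then proves $\|(K_\delta-\z)\widetilde T(\z;\delta)-\bb1\|\leq C\delta^{1/2}\dist(\z,\sigma(K_0))^{-1}$ by combining a Newton--Leibniz comparison of symbols (Lemma 3.6) with a commutator bound (Lemma 3.7), and optimizes at $\kappa=1/2$. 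You instead localize in \emph{position} on balls of radius $r=|\delta|^{-1/2}$ and argue via Weyl sequences, IMS localization, and a pigeonhole selection. Both schemes exploit the same balance $\delta r\sim r^{-1}$, so $r\sim|\delta|^{-1/2}$: your symbol error $\widetilde\chi_j e_{\delta,j}$ has seminorms $O(\sqrt{|\delta|})$, mirroring the paper's factor $\delta^\kappa(\delta^{1-\kappa}F(z)-\gamma)$; your commutator $[K_{\delta,j},\chi_j]$ is $O(r^{-1})$, mirroring the paper's $O(\delta^{1-\kappa})$; both are handled through Calder\'on--Vaillancourt plus rapid off-diagonal decay of the kernel. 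Your opening observation that a global norm bound $\|K_\delta-K_0\|$ cannot give $\sqrt{|\delta|}$ because of the linear growth of $F$ is exactly why both proofs need localization. The quasi-resolvent formulation has the advantage of handling the summation over $\gamma$ through an almost-orthogonality computation on the resolvent side once and for all, whereas your route requires carefully coupling every per-$j$ pseudolocal remainder to a localized $L^2$-mass $\|\mathbf{1}_{A_j}\phi\|^2$ rather than to $\|\phi\|^2$; you correctly flag this as the main technical burden.

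There is, however, a concrete gap in your treatment of the reverse inclusion $\sigma(K_0)\subset\big\{w:\dist(w,\sigma(K_\delta))\leq C\sqrt{|\delta|}\big\}$. You assert it is ``symmetric'' because $U_{j}^*$ transfers a Weyl sequence of $K_0$ into one of $K_{\delta,j}$; but $U_{j_0}^*(\chi_{j_0}\phi)$ is supported near $z_{j_0}-c_{j_0}$, while the comparison $K_\delta\approx K_{\delta,j_0}$ that your Lemma furnishes is valid only on $B(z_{j_0},8r)$. The displacement $|c_{j_0}|=|\delta F(z_{j_0})|$ can be of order $\delta|z_{j_0}|$, hence arbitrarily larger than $r=|\delta|^{-1/2}$, so the transferred sequence falls outside the region where your local estimate applies and the argument does not close. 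One correct way to repair it is to re-center the localization on the deformed lattice $\widetilde z_j:=z_j+\delta F(z_j)=\Phi_\delta(z_j)$ (with $\Phi_\delta(x)=x+\delta F(x)$ a uniformly bi-Lipschitz diffeomorphism for $\delta$ small) and compare $K_0$ near $\widetilde z_j$ with $U_jK_\delta U_j^*$, whose symbol is $a\big(z-c_j+\delta F(z-c_j),\xi\big)$ and agrees with $a(z,\xi)$ to $O(\sqrt{|\delta|})$ precisely on $B(\widetilde z_j,Cr)$; alternatively one can write $K_0=\Op^w\big(a[F]_\delta\circ(\Phi_\delta^{-1}\times\id)\big)$ and apply the forward argument with the roles of $a$ and $a[F]_\delta$, and of $F$ and $G_\delta:=\delta^{-1}(\Phi_\delta^{-1}-\id)$, exchanged (note $G_\delta\in C^\infty_1$ with seminorms uniform in small $\delta$). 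Either fix is needed before the ``symmetric'' direction can be called proved; the paper's quasi-resolvent $\widetilde S(\z;\delta)$ built from translates of $(K_\delta-\z)^{-1}$ with the \emph{same} range-of-$F$ cutoffs is the analogue device and also deserves the corresponding verification.
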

\begin{remark}
	Counter-examples from the literature show that this estimation is 'sharp', i.e. spectral gaps of order $\sqrt{|\delta|}$ may be created by these type of perturbations.
\end{remark}
From Theorem 1.5 in \cite{CP3} and some other similar results from the literature, one may expect a more regular behaviour of the spectral edges {\  seen as functions of $\delta$. By spectral edges we understand quantities like $\sup \sigma(K_\delta)$, $\inf \sigma(K_\delta)$, or the extremities of the possible inner gaps in the spectrum}. In this situation we obtain the following result depending on the decay at infinity of the second order derivatives of the 'perturbing function' $F\in C^\infty_1(\Rd;\Rd)$.
\begin{theorem}\label{T-B}
	Suppose that $\big|\big(\partial_{x_j}\partial_{x_k}F\big)(x)\big|\leq\,C<x>^{-(1+\mu)}$ for some $C>0$, $\mu>0$ and for any pair of indices $(j,k)$. Then there exists $C(a,F)>0$ and $\delta_0>0$ such that for $|\delta|\leq\delta_0$ we have the estimation:
	$$
	\big|\,\mathcal{E}_\pm(\delta)\,-\,\mathcal{E}_\pm(0)\,\big|\,\leq\,C(a,F)|\delta|^{(1+\mu)/(2+\mu)}.
	$$	
\end{theorem}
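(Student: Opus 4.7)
The plan is to prove Theorem~\ref{T-B} by a multiscale position-localization at a scale $\ell=\ell(\delta)$, combined with a local affine (first-order Taylor) approximation of $F$ on each localization cell. The spatial decay $|\partial^2 F(x)|\le C\langle x\rangle^{-(1+\mu)}$ enters as a quantitative improvement of the Taylor remainder for this local affine approximation, and the optimization of $\ell$ against $|\delta|$ delivers the exponent $(1+\mu)/(2+\mu)$.

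Since $a$ is real-valued, $K_\delta$ is self-adjoint and $\mathcal{E}_+(\delta)=\sup_{\|\psi\|=1}(\psi,K_\delta\psi)$. It is enough to prove the uniform quadratic-form bound $(\psi,K_\delta\psi)\le \mathcal{E}_+(0)+C|\delta|^{(1+\mu)/(2+\mu)}$ for normalized $\psi\in L^2(\Rd)$; the lower edge is symmetric, and the edges of internal gaps are handled via the Helffer--Sj\"ostrand functional calculus with the spectral projectors of $K_0$ on intervals adjacent to the gap. I would introduce an IMS quadratic partition of unity $\{\chi_j^2\}_{j\in\Z^d}$ with $\chi_j\in C^\infty_c$ supported in a ball $B(x_j,2\ell)$ centred at $x_j=2\ell j$, yielding
\[
(\psi,K_\delta\psi)=\sum_j(\chi_j\psi,K_\delta\chi_j\psi)+\mathcal{R}_{\mathrm{IMS}}(\ell,\psi),
\]
with $\mathcal{R}_{\mathrm{IMS}}$ controlled via the Moyal expansion of $[\chi_j,[\chi_j,K_\delta]]$ and the uniform $S^0_{0,0}$-membership of $a[F]_\delta$ (Remark~\ref{R-est-aFdelta}).

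On each ball I would replace $F$ by its affine Taylor polynomial $F_j^{\mathrm{aff}}(x):=F(x_j)+\nabla F(x_j)(x-x_j)$ and set $K^{(j)}_\delta:=\Op^w(a[F_j^{\mathrm{aff}}]_\delta)$. Taylor's formula with integral remainder and the hypothesis give, for $x\in\supp\chi_j$ and $|\alpha|\le 2$,
\[
|\partial^\alpha(F-F_j^{\mathrm{aff}})(x)|\le C_{|\alpha|}\,\ell^{(2-|\alpha|)_+}\min(1,\langle x_j\rangle^{-(1+\mu)}),
\]
and the Calderon--Vaillancourt criterion \eqref{Cr-Bd} applied to the cutoff symbol $\chi_j(a[F]_\delta-a[F_j^{\mathrm{aff}}]_\delta)\chi_j$ (with the order-$\ge 3$ Taylor remainder handled separately through the bounded higher derivatives of $F$) produces an operator-norm bound of size $O(|\delta|\ell^2\min(1,\langle x_j\rangle^{-(1+\mu)}))$, modulo additive terms of order $|\delta|$. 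The crucial step is then a uniform-in-$j$ quantitative spectral comparison $\sigma(K_\delta^{(j)})\subset\sigma(K_0)+[-C|\delta|,C|\delta|]$: since $K_\delta^{(j)}$ is the Weyl quantization of $a$ precomposed with the invertible affine map $x\mapsto B_jx+c_j$, $B_j:=I+\delta\nabla F(x_j)$, $c_j:=\delta(F(x_j)-\nabla F(x_j)x_j)$, one reduces it via an $L^2$-translation (absorbing $c_j$) and a metaplectic conjugation corresponding to the symplectic lift $(x,\xi)\mapsto(B_jx,B_j^{-T}\xi)$ of the linear $x$-change to $\Op^w(a(x,B_j^{-T}\xi))$, and the residual $\xi$-rescaling is shown to produce only an $O(|\delta|)$ spectral deviation. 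Combining the previous estimates and balancing the resulting error terms at $\ell=|\delta|^{-1/(2(2+\mu))}$ yields the desired bound.

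The main obstacle is this last spectral-comparison step: the affine change of the position variable alone is not symplectic, and the symplectic completion produces an $O(|\delta|)$ rescaling of $\xi$ whose symbol-level difference with the identity is unbounded in $\xi$, so a direct Calderon--Vaillancourt estimate yields at best a bounded (not small) operator-norm bound. The quantitative $O(|\delta|)$ spectral shift must therefore be established via a more delicate Helffer--Sj\"ostrand / resolvent argument that exploits the boundedness of $K_0\in\Op^w(S^0_{0,0})$ together with the specific structure of the $\xi$-rescaling. A secondary technical point is to sharpen the IMS / Taylor error balance to exactly recover the exponent $(1+\mu)/(2+\mu)$, which is where the decay of $\partial^2 F$ is fully exploited — most naturally through either a position-dependent choice of $\ell$ adapted to the weight $\langle x\rangle^{-(1+\mu)}$, or through a decay-assisted refinement of the double-commutator estimate for $\mathcal{R}_{\mathrm{IMS}}$.
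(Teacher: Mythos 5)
Your proposal takes a genuinely different route from the paper, and it contains a quantitative gap that prevents it from reaching the claimed exponent. The core problem is the balance between the IMS localization error and the local affine Taylor error. The double-commutator term $\mathcal{R}_{\mathrm{IMS}}=-\tfrac12\sum_j[\chi_j,[\chi_j,K_\delta]]$ has kernel $(\chi_j(x)-\chi_j(y))^2K_\delta(x,y)$, and since $a[F]_\delta\in S^0_{0,0}(\Rd\times\Rd)$ with bounds uniform in $\delta$, the Calder\'on--Vaillancourt criterion gives $\|\mathcal{R}_{\mathrm{IMS}}\|\lesssim\ell^{-2}$ with a constant that does \emph{not} see the decay of $\partial^2F$ at all. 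On the other hand, the Taylor error on the cell centred at $x_j=0$, where $\langle x_j\rangle^{-(1+\mu)}=1$, is $O(|\delta|\ell^2)$. Balancing $\ell^{-2}$ against $|\delta|\ell^2$ forces $\ell\sim|\delta|^{-1/4}$ and yields the generic rate $|\delta|^{1/2}$. With your proposed scale $\ell=|\delta|^{-1/(2(2+\mu))}$, the IMS error is $\ell^{-2}=|\delta|^{1/(2+\mu)}$, which for $\mu>0$ is strictly \emph{larger} than the target $|\delta|^{(1+\mu)/(2+\mu)}$, so the IMS term dominates and destroys the improvement. A position-dependent $\ell(x)\sim\ell_0\langle x\rangle^\alpha$ does not help either: requiring $\ell_j^{-2}\le\epsilon$ and $|\delta|\ell_j^2\langle x_j\rangle^{-(1+\mu)}\le\epsilon$ simultaneously at $x_j=0$ again forces $\epsilon\ge|\delta|^{1/2}$. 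You flag a ``decay-assisted refinement of the double-commutator estimate'' as a secondary point, but this is actually where the argument breaks: there is no source of decay available for $\mathcal{R}_{\mathrm{IMS}}$ near the origin.

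A second, independent gap is the uniform-in-$j$ statement $\sigma(K_\delta^{(j)})\subset\sigma(K_0)+[-C|\delta|,C|\delta|]$ for the affine-perturbed operators. As you yourself note, the symplectic completion of the linear position change produces a $\xi$-rescaling whose symbol-level difference from the identity is unbounded, so no direct Calder\'on--Vaillancourt estimate gives an $O(|\delta|)$ bound. The Lipschitz spectral stability for affine dilations is precisely the content of \cite{GRS}, whose proof relies on Wiener/Sj\"ostrand-algebra machinery rather than a Helffer--Sj\"ostrand argument for $S^0_{0,0}$ symbols. Your proposal therefore reduces Theorem~\ref{T-B} to an unproved (in your argument) and highly non-elementary black box.

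For comparison, the paper's proof avoids both issues entirely. Instead of an IMS partition, it averages the quadratic form against a Gaussian weight $\mW_\kappa(z-u)$ in an auxiliary variable $u$, replaces $F(z)$ by the ``frozen'' $F(u)$ so that the perturbation becomes a pure translation $\tau_{-\delta F(u)}$ acting unitarily (no affine spectral comparison needed), and introduces a second cutoff $\chi_\theta$ on $F$. The Gaussian averaging error is $O(\kappa^2)$ (not $\kappa^{-2}$), and crucially the evenness of the Gaussian kills the first-order Taylor term in $z-u$, so only the second-order term survives — this is exactly where the hypothesis $|\partial^2F|\lesssim\langle x\rangle^{-(1+\mu)}$ enters, producing the factor $\theta^{1+\mu}$ that allows the exponent to climb above $1/2$. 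Your commutator-based localization has no analogous cancellation.
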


\section{Proof of Theorem \ref{T-A}}

The main idea of the proof is to construct a 'quasi-resolvent" (see \eqref{DF-qRez-delta}) and use the unitarity of $x$-translations and localization around a lattice of points in $\R^d_x$ in order to control the possible linear growth of $F$.
We notice that the invariance of our arguments when changing $F$ into $-F$ allows us to work with $\delta\geq0$.

Let us consider some exponent $\kappa\in(0,1]$ and the discrete family of points $\Gamma_\delta:=\big\{z_\gamma(\delta):=\delta^\kappa\gamma\in\R^d,\ \gamma\in\Z^d\big\}$. We notice that for any $\mz\in\Gamma_\delta$, the bounded operator $\tau_{-\mz}K_0\tau_{\mz}$ has the integral kernel $\mathfrak{K}_0(z+\mz,v)$, with $\mathfrak{K}_0(z,v)$ given in \eqref{F-intkerdelta}. Thus, given some $\gamma\in\Z^d$ let us consider the difference: $K_\delta-\tau_{-z_\gamma(\delta)}K_0\tau_{z_\gamma(\delta)}$ and its associated distribution kernel, considered as smooth distribution valued function on $\R^d$ and use Newton-Leibniz formula in the first variable to obtain:
	\begin{align} \nonumber
		\mathfrak{K}_\delta(z,\cdot)-\mathfrak{K}_0\big(z+\delta^\kappa\gamma,\cdot\big)
		&=\mathfrak{K}_0\big(z+\delta\,F(z),\cdot\big)-\mathfrak{K}_0\big(z+\delta^\kappa\gamma,\cdot\big)\\ \nonumber
		&=\int_0^1ds\,\Big(\big(\nabla_z\mathfrak{K}_0\big)\big(z+\delta^\kappa\gamma+s(\delta\,F(z)-\delta^\kappa\gamma),\cdot\big)\Big)\cdot\left(\delta\,F(z)-\delta^\kappa\gamma\right)\\ \label{F-kdelta-k0}
		&\cancel{\equiv} {\  \, \, =:\, }\delta^{\kappa}\,\big[\mathfrak{D}_1\mathfrak{K}_0\big](z,\cdot)\cdot\big(\delta^{1-\kappa}F(z)-\gamma\big)
	\end{align}
	with the last line giving the definition of $\big[\mathfrak{D}_1\mathfrak{K}_0\big](z,\cdot)$. We can then define the mapping $\boldsymbol{\Psi}_s[F]_\gamma^{(\delta)}:\R^d\ni\,z\mapsto\,z+\delta^\kappa\gamma+s(\delta\,F(z)-\delta^\kappa\gamma)\in\R^d$ and write that in the sense of tempered distributions:
\begin{align*}
	\big[\mathfrak{D}_1\mathfrak{K}_0\big]&=\int_0^1ds\,\big(\nabla_z\mathfrak{K}_0\big)\circ(\boldsymbol{\Psi}_s[F]_\gamma^{(\delta)},\bb1)=(2\pi)^{-d/2}\int_0^1ds\,\Big(\big((\bb1\otimes\mathcal{F}^-)(\nabla_za)\big)\circ(\boldsymbol{\Psi}_s[F]_\gamma^{(\delta)},\bb1)\Big)\\
	&=(2\pi)^{-d/2}(\bb1\otimes\mathcal{F}^-)\Big[\int_0^1ds\,\Big((\nabla_za)\big)\circ(\boldsymbol{\Psi}_s[F]_\gamma^{(\delta)},\bb1)\Big)\Big],
\end{align*}
with $\bb1:\Rd\rightarrow\Rd$ being the identity map, denoting by $(\Psi,\Phi)$ the map $\Rd\times\Rd\ni(x,y)\mapsto\big(\Psi(x),\Psi(y)\big)\in\Rd\times\Rd$ for any pair of maps $\Psi\in C^\infty(\Rd;\Rd)$ and $\Phi\in C^\infty(\Rd;\Rd)$. The above formula evidengtly implies that:
\beq\label{F-est-F-kdelta-k0}
(\bb1\otimes\mathcal{F})\big[\mathfrak{D}_1\mathfrak{K}_0\big]=(2\pi)^{-d/2}\Big[\int_0^1ds\,\Big((\nabla_za)\big)\circ(\boldsymbol{\Psi}_s[F]_\gamma^{(\delta)},\bb1)\Big)\Big]
\eeq
	In order to estimate the operator norm of the linear operator defined by this distribution kernel, we use our boundedness criterion \eqref{Cr-Bd} and notice that:
	\beq\label{F-est-kdelta-k0}\begin{split}
	&\partial^\alpha_z\partial^\beta_\xi(\bb1\otimes\mathcal{F})\big[\mathfrak{D}_1\mathfrak{K}_0\big]=(2\pi)^{-d/2}\Big[\int_0^1ds\,\Big((\nabla_z\partial^\alpha_z\partial^\beta_\xi\,a)\big)\circ(\boldsymbol{\Psi}_s[F]_\gamma^{(\delta)},\bb1)\Big)\Big]
	\end{split}\eeq
being bounded by $\nu_{|\alpha|+1,|\beta|}(a)$. Thus, if we can impose by some localization procedure, a bound uniform in $(z,\gamma)\in\R^d\times\Z^d$ for the factor $\delta^{1-\kappa}F(z)-\gamma$ and its $z$-derivatives then we may obtain a decaying factor $\delta^\kappa$ going to 0 with $\delta\geq0$. We are thus lead to consider the following partition of unity:
	\begin{itemize}
		\item We fix a function $g\in C^\infty_0\big(\R^d;[0,1]\big)$ such that: $\underset{\gamma\in\Z^d}{\sum}g(z-\gamma)^2=1,\ \forall z\in\R^d$.
		\item For any $\gamma\in\Z^d$ we define the cut-off function: $g[F_\delta]_\gamma(z):=g\big(\delta^{(1-\kappa)}F(z)-\gamma\big)$. 
		\item Given $\gamma\in \Z^d$ we denote by $V_\gamma$ the set of all $\gamma'\in \Z^d$ with the property that the support of  $g[F_\delta]_{\gamma^\prime}$ has a non-empty overlap with the support of $g[F_\delta]_\gamma$, including $\gamma'=\gamma$. Denote by $\mathfrak{n}_g\in\mathbb{N}\setminus\{0\}$ the cardinal of $V_\gamma$, notice that it is clearly independent of $\gamma$ and $\delta$ and that:
		\begin{align*}
		&\underset{\gamma\in\Z^d}{\sum}\big[g[F_\delta]_\gamma(z)\big]^2=1,\ \forall z\in\R^d,\\
		&z\in\supp\,g[F_\delta]_\gamma\ \Longrightarrow\ \exists L>0,\ \big|\delta^{(1-\kappa)}F(z)-\gamma\big|\,\leq\,L.
		\end{align*}
	\item Finally let us denote by $G[F_\delta]_\gamma$ the self-adjoint, bounded operator of multiplication with $g[F_\delta]_\gamma$ in $L^2(\R^d)$. Obviously $G[F_\delta]_\gamma=\Op^w\big(g[F_\delta]_\gamma\big)$ for $g[F_\delta]_\gamma\in S^0_{0,0}(\R^d\times\R^d)$ a symbol independent of the second variable.
	\end{itemize}
	
\paragraph{The quasi-rezolvent for $K_\delta$.} Let us fix any $\z\notin\sigma\big(K_0\big)$ and define:
\beq\label{DF-qRez-delta}
T_\gamma(\z;\delta):=\tau_{-z_\gamma(\delta)}\,\big(K_0-\z\bb1\big)^{-1}\tau_{z_\gamma(\delta)},\quad
\widetilde{T}(\z;\delta):=\underset{\gamma\in\Z^d}{\sum}G[F_\delta]_\gamma\,T_\gamma(\z;\delta)\,G[F_\delta]_\gamma.
\eeq

\begin{remark}\label{R-T-gamma}
Unitarity of translations and the functional calculus for self-adjoint operators imply that for any $\delta\in[0,1]$ we have the estimation: $$\big\|T_\gamma(\z;\delta)\big\|_{\mathbb{B}(L^2(\R^d))}\,=\,\big\|\big(K_0-\z\bb1\big)^{-1}\big\|_{\mathbb{B}(L^2(\R^d))}\,\leq\,\dist\big(\z,\sigma(K_0)\big)^{-1},\ \forall\gamma\in\Z^d.$$
\end{remark}
\begin{lemma}
For any $\delta\in[0,1]$ the series in \eqref{DF-qRez-delta} is convergent in the strong operator topology and we have the estimation $$\big\|\widetilde{T}(\z;\delta)\big\|_{\mathbb{B}(L^2(\R^d))}\,\leq\,\sqrt{(\mathfrak{n}_g+1)/2}\big\|\big(K_0-\z\bb1\big)^{-1}\big\|_{\mathbb{B}(L^2(\R^d))}\,\leq\,\sqrt{(\mathfrak{n}_g+1)/2}\dist\big(\z,\sigma(K_0)\big)^{-1}.$$
\end{lemma}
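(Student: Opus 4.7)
The statement has two components: strong-operator convergence of the series and the norm estimate. My plan is to first establish a uniform norm bound on the finite partial sums $\widetilde{T}_\Lambda(\z;\delta) := \sum_{\gamma \in \Lambda} G[F_\delta]_\gamma T_\gamma(\z;\delta) G[F_\delta]_\gamma$ for $\Lambda \subset \Z^d$ finite, and then upgrade to strong convergence by density of compactly supported functions.

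For the uniform bound, the three key ingredients are: (i) the Plancherel-type identity $\sum_\gamma \|G[F_\delta]_\gamma \varphi\|^2 = \|\varphi\|^2$, which follows from the partition of unity $\sum_\gamma g[F_\delta]_\gamma^2 \equiv 1$; (ii) the uniform resolvent bound $\|T_\gamma(\z;\delta)\| \leq \|(K_0-\z\bb1)^{-1}\|$ from Remark \ref{R-T-gamma}; and (iii) the locally finite overlap: since $G[F_\delta]_\gamma T_\gamma G[F_\delta]_\gamma \psi$ is supported in $\supp g[F_\delta]_\gamma$, the cross inner products $\langle G_\gamma T_\gamma G_\gamma \psi, G_{\gamma'} T_{\gamma'} G_{\gamma'}\psi\rangle$ arising in the expansion of $\|\widetilde{T}_\Lambda \psi\|^2$ vanish unless $\gamma' \in V_\gamma$. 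I would bound each surviving term by $\|(K_0-\z\bb1)^{-1}\|^2 \|G[F_\delta]_\gamma \psi\| \|G[F_\delta]_{\gamma'}\psi\|$, apply AM-GM, and use the symmetry $\gamma' \in V_\gamma \Leftrightarrow \gamma \in V_{\gamma'}$ to symmetrically pair the contributions; together with (i) this produces $\|\widetilde{T}_\Lambda\|\leq \sqrt{(\mathfrak{n}_g+1)/2}\,\|(K_0-\z\bb1)^{-1}\|$ uniformly in $\Lambda$.

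For strong convergence, take $\varphi \in C_c^\infty(\R^d)$ with compact support $K$. Then $G[F_\delta]_\gamma \varphi \neq 0$ forces $\gamma \in \delta^{1-\kappa}F(K) + \supp g$, which is a bounded subset of $\R^d$ (by continuity of $F$ on $K$) and therefore contains only finitely many lattice points. Consequently the partial sum $\widetilde{T}_\Lambda(\z;\delta)\varphi$ stabilizes once $\Lambda$ contains that finite set, so $\widetilde{T}(\z;\delta)\varphi$ is unambiguously defined on the dense subspace $C_c^\infty(\R^d)$. Combined with the uniform partial-sum bound above, a standard $\vep/3$-argument extends strong operator convergence to all of $L^2(\R^d)$, and the limit inherits the claimed norm estimate.

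The only mildly delicate point is tracking the precise constant $\sqrt{(\mathfrak{n}_g+1)/2}$ through the double-sum accounting; the naive sesquilinear form estimate $|\langle \psi, \widetilde{T}_\Lambda \varphi\rangle| \leq \|(K_0-\z\bb1)^{-1}\| \sum_\gamma \|G_\gamma \psi\|\|G_\gamma \varphi\|$ already controls the norm, so the stated constant is a refinement rather than a strict necessity, and everything else is routine Cauchy-Schwarz together with the partition-of-unity identity.
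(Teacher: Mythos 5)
Your argument is correct and follows the same route as the paper's proof: expand $\|\widetilde T^{(M)}\psi\|^2$ as a double sum, use the support overlap to restrict to $\gamma'\in V_\gamma$, apply Cauchy--Schwarz and AM--GM together with the symmetry $\gamma'\in V_\gamma\Leftrightarrow\gamma\in V_{\gamma'}$, and close with the quadratic partition-of-unity identity $\sum_\gamma g[F_\delta]_\gamma^2\equiv 1$. The only difference is cosmetic: the paper dismisses strong convergence as ``evident'' once the uniform partial-sum bound is in hand, whereas you make the density-of-$C_c^\infty$ argument explicit, and you correctly flag that the precise constant $\sqrt{(\mathfrak{n}_g+1)/2}$ is a bookkeeping refinement rather than essential to the proof.
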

\begin{proof} (\textit{For the convenience of the reader we reproduce here our proof of Lemma 2.4 in \cite{CP3}})
	For $\psi\in L^2(\R^d)$, let us consider any $M\in\mathbb{N}$ let us define:
	\beq\label{F-part-sum}
	\widetilde{T}(\z;\delta)^{(M)}:=\underset{|\gamma|\leq M}{\sum}G[F_\delta]_\gamma\,T_\gamma(\z;\delta)\,G[F_\delta]_\gamma
	\eeq
	and compute:
	\begin{align*}
		&\sum_{|\gamma|\leq M}\sum_{\gamma'\in V_\gamma}\langle G[F_\delta]_\gamma\,T_\gamma(\z;\delta)\,G[F_\delta]_\gamma \; \psi\,,\, G[F_\delta]_{\gamma^\prime}\,T_{\gamma^\prime}(\z;\delta)\,G[F_\delta]_{\gamma^\prime}\;  \psi \rangle \leq\\
		&\leq\, \frac{\nu +1}{2}\sum_{|\gamma|\leq M}\Vert T_{\gamma^\prime}(\z;\delta)\,G[F_\delta]_{\gamma^\prime}\; \psi\Vert ^2\leq \big\|\big(K_0-\z\bb1\big)^{-1}\big\|^2_{\mathbb{B}(L^2(\R^d))}\frac{\mathfrak{n}_g+1}{2}\; \underset{|\gamma|\leq M}{\sum}\int_{\R^d}dz\,\big[g[F_\delta]_\gamma(z)\big]^2|\psi(z)|^2\; dx\\
		&\leq\,\big\|\big(K_0-\z\bb1\big)^{-1}\big\|^2_{\mathbb{B}(L^2(\R^d))}\; \frac{\mathfrak{n}_g+1}{2}\; \Vert \psi\Vert^2,
	\end{align*}
	where in the last equality we used the quadratic partition of unity identity in the definition of $g\in C^\infty_0\big(\R^d;[0,1]\big)$. The convergence and the estimation in the Lemma are then evident.
\end{proof}

\begin{proposition}\label{P-main-est}
With the above notations and hypothesis, we have the estimation:
$$
\Big\|\big(K_\delta-\z\bb1\big)\,\widetilde{T}(\z;\delta)\,-\,\bb1\Big\|_{\mathbb{B}(L^2(\R^d))}\,\leq\,C(a,F)\,\delta^{1/2}\,\big[\dist\big(\z,\sigma(K_0)\big)\big]^{-1}.
$$
\end{proposition}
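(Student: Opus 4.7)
The natural starting point is to establish the algebraic identity
\[
(K_\delta - \z\bb1)\widetilde{T}(\z;\delta) - \bb1 \,=\, R_1(\z;\delta) \,+\, R_2(\z;\delta),
\]
where, setting $G_\gamma := G[F_\delta]_\gamma$ and $K_{0,\gamma} := \tau_{-z_\gamma(\delta)}\, K_0\, \tau_{z_\gamma(\delta)}$,
\begin{align*}
R_1(\z;\delta) &:= \sum_{\gamma\in\Z^d} G_\gamma\, (K_\delta - K_{0,\gamma})\, T_\gamma(\z;\delta)\, G_\gamma, \\
R_2(\z;\delta) &:= \sum_{\gamma\in\Z^d} [K_\delta, G_\gamma]\, T_\gamma(\z;\delta)\, G_\gamma.
\end{align*}
This is obtained by splitting $K_\delta - \z = (K_\delta - K_{0,\gamma}) + (K_{0,\gamma} - \z)$, commuting $G_\gamma$ past $K_\delta - \z$, using the identity $(K_{0,\gamma} - \z)T_\gamma(\z;\delta) = \bb1$ (a direct consequence of the unitarity of translations, cf. Remark \ref{R-T-gamma}), and applying the quadratic partition of unity $\sum_\gamma G_\gamma^2 = \bb1$. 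The plan is then to prove $\|R_1\|\leq C(a,F)\delta^\kappa\dist(\z,\sigma(K_0))^{-1}$ and $\|R_2\|\leq C(a,F)\delta^{1-\kappa}\dist(\z,\sigma(K_0))^{-1}$, and to balance the two exponents by taking $\kappa = 1/2$.

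For $R_1$, I would use formula \eqref{F-kdelta-k0} to factor the distributional kernel of $K_\delta - K_{0,\gamma}$ as $\delta^\kappa[\mathfrak{D}_1\mathfrak{K}_0](z,v)\cdot(\delta^{1-\kappa}F(z)-\gamma)$, where $(\bb1\otimes\mathcal{F})[\mathfrak{D}_1\mathfrak{K}_0]$ obeys CV-seminorm bounds uniform in $\gamma,\delta,s$ by \eqref{F-est-kdelta-k0}. The scalar factor $\delta^{1-\kappa}F(z)-\gamma$ can grow linearly in $z$, which forbids a direct CV estimate of $K_\delta - K_{0,\gamma}$ on its own, but on $\supp g[F_\delta]_\gamma$ it is uniformly bounded by $L$. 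The task at this step is to verify that all relevant CV seminorms \eqref{Cr-Bd} of the Weyl symbol of the sandwiched operator $G_\gamma (K_\delta - K_{0,\gamma}) G_\gamma$ are of order $\delta^\kappa$ uniformly in $\gamma$; the rapid $v$-decay of $[\mathfrak{D}_1\mathfrak{K}_0](z,v)$ away from $v=0$ (Proposition \ref{P-distr-ker}) is what allows one to identify the sandwiching cutoffs $g[F_\delta]_\gamma(z\pm v/2)$ with $g[F_\delta]_\gamma(z)$ up to $O(|v|)$-errors absorbed into that decay. Once this uniform bound is obtained, combining it with Remark \ref{R-T-gamma} and the Cauchy-Schwarz / bounded-overlap summation scheme from the proof of the preceding Lemma will produce the desired estimate for $R_1$.

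For $R_2$, I would compute the symbol of $[K_\delta, G_\gamma]$ as the Moyal bracket $[a[F]_\delta, g[F_\delta]_\gamma]_\sharp$. Because $g[F_\delta]_\gamma$ is $\xi$-independent, every term in its asymptotic expansion carries at least one $x$-derivative of $g[F_\delta]_\gamma$; and by the chain rule $\partial_x^\alpha g[F_\delta]_\gamma$ vanishes off $\supp g[F_\delta]_\gamma$ and has sup-norm bounded by $C_\alpha\delta^{1-\kappa}$ for every $|\alpha|\geq 1$, with higher derivatives contributing only additional smaller powers. Hence all relevant CV seminorms of the commutator symbol will be of order $\delta^{1-\kappa}$ uniformly in $\gamma$. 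The sum over $\gamma$ can then be handled by the same bounded-overlap scheme, applicable because $[K_\delta, G_\gamma]$ is effectively localised on $\supp\nabla g[F_\delta]_\gamma\subset\supp g[F_\delta]_\gamma$, yielding $\|R_2\|\leq C(a,F)\delta^{1-\kappa}\dist(\z,\sigma(K_0))^{-1}$.

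The main obstacle is the rigorous derivation of the $\delta^\kappa$ operator-norm bound on $G_\gamma (K_\delta - K_{0,\gamma}) G_\gamma$ needed for $R_1$: the ``dangerous'' scalar $\delta^{1-\kappa}F(z)-\gamma$ is evaluated at $z$, while the sandwiching cutoffs act at the off-diagonal points $z\pm v/2$. Propagating every mixed $\partial_z^\alpha\partial_\xi^\beta$-derivative through the full Weyl symbol, and carefully exploiting the rapid $v$-decay of $[\mathfrak{D}_1\mathfrak{K}_0]$ so that the $O(|v|)$-discrepancy between $z\pm v/2$ and $z$ does not ruin the $\delta^\kappa$ prefactor, will be the main technical task. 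Once both estimates are in place, the choice $\kappa = 1/2$ balances them and yields the stated $\delta^{1/2}$ bound.
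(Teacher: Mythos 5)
Your proposal follows essentially the same approach as the paper: the same quasi-resolvent, the same algebraic split of $(K_\delta-\z)G[F_\delta]_\gamma T_\gamma G[F_\delta]_\gamma - g[F_\delta]_\gamma^2\bb1$ into a kernel-difference remainder of order $\delta^\kappa$ (via \eqref{F-kdelta-k0}--\eqref{F-est-kdelta-k0}) plus a commutator remainder of order $\delta^{1-\kappa}$, and the same balancing $\kappa=1/2$; your decomposition is algebraically equivalent to the paper's, differing only in whether one commutes $G[F_\delta]_\gamma$ through $K_\delta-\z$ (yours) or through $\tau_{-z_\gamma(\delta)}(K_0-\z)\tau_{z_\gamma(\delta)}-\z$ (theirs), which produces the same two remainders up to a harmless adjoint. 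The one small caveat is your appeal to an ``asymptotic expansion'' of the Moyal bracket for $R_2$: in $S^0_{0,0}$ there is no gaining pseudodifferential calculus, but since $g[F_\delta]_\gamma$ is $\xi$-independent the commutator kernel is exactly $\mathfrak{K}[a]\bigl(g[F_\delta]_\gamma(y)-g[F_\delta]_\gamma(x)\bigr)$ and a single Newton--Leibniz step yields the $O(\delta^{1-\kappa})$ bound directly --- which is precisely how the paper's second lemma proceeds, so the rigorous version of your $R_2$ argument coincides with theirs.
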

\begin{proof}
For any $M\in\mathbb{N}$ let $\widetilde{T}(\z;\delta)^{(M)}$ as in \eqref{F-part-sum} be the partial sum approaching $\widetilde{T}(\z;\delta)$ in the strong operator topology and let us consider the product:
\beq\label{Step-0}
\big(K_\delta-\z\bb1\big)\,\widetilde{T}(\z;\delta)^{(M)}=\underset{|\gamma|\leq M}{\sum}\big(K_\delta-\z\bb1\big)\,G[F_\delta]_\gamma\,T_\gamma(\z;\delta)\,G[F_\delta]_\gamma.
\eeq

We notice that for any $\gamma\in\Z^d$ we can write that:
\begin{align}\label{Step-1}
\big(K_\delta-\z\bb1\big)\,G[F_\delta]_\gamma\,T_\gamma(\z;\delta)\,G[F_\delta]_\gamma&=\tau_{-z_\gamma(\delta)}\,\big(K_0-\z\bb1\big)^{-1}\tau_{z_\gamma(\delta)}\,G[F_\delta]_\gamma\,T_\gamma(\z;\delta)\,G[F_\delta]_\gamma\,+\\ \nonumber
&\hspace*{0.5cm}+\,\Big[\big(K_\delta-\z\bb1\big)\,-\,\tau_{-z_\gamma(\delta)}\,\big(K_0-\z\bb1\big)^{-1}\tau_{z_\gamma(\delta)}\Big]\,G[F_\delta]_\gamma\,T_\gamma(\z;\delta)\,G[F_\delta]_\gamma\\ \nonumber
&=\,\big[g[F_\delta]_\gamma\big]^2\,\bb1\,+\,\Big[\tau_{-z_\gamma(\delta)}\,\big(K_0-\z\bb1\big)^{-1}\tau_{z_\gamma(\delta)}\,,\,G[F_\delta]_\gamma\Big]T_\gamma(\z;\delta)\,G[F_\delta]_\gamma\,+\\ \nonumber
&\hspace*{0.5cm}+\,\Big[\big(K_\delta-\z\bb1\big)\,-\,\tau_{-z_\gamma(\delta)}\,\big(K_0-\z\bb1\big)^{-1}\tau_{z_\gamma(\delta)}\Big]\,G[F_\delta]_\gamma\,T_\gamma(\z;\delta)\,G[F_\delta]_\gamma.
\end{align}
\begin{lemma}\label{L-1} For any $\gamma\in\Z^d$ we have the estimation:
$$\Big\|\Big[\big(K_\delta-\z\bb1\big)\,-\,\tau_{-z_\gamma(\delta)}\,\big(K_0-\z\bb1\big)^{-1}\tau_{z_\gamma(\delta)}\Big]\,G[F_\delta]_\gamma\Big\|_{\mathbb{B}(L^2(\R^d))}\,\leq\,C(a,F)\delta^\kappa.
$$
\end{lemma}
\begin{proof}
We consider thr bounded operator:
\beq
H_\delta:=\Big[\big(K_\delta-\z\bb1\big)\,-\,\tau_{-z_\gamma(\delta)}\,\big(K_0-\z\bb1\big)^{-1}\tau_{z_\gamma(\delta)}\Big]\,G[F_\delta]_\gamma
\eeq 
appearing in the statement of the Lemma and compute its distribution kernel:
\begin{align}
\mathfrak{K}[H_\delta]=\Big[\mathfrak{K}\big[a[F]_\delta\big]-\Big(\mathfrak{K}[a]\circ(\tau_{z_\gamma(\delta)}\otimes\tau_{z_\gamma(\delta)})\Big)\Big]\big(1\otimes g[F_\delta]_\gamma\big).
\end{align}
We shall prefer to work with the modified kernels: 
\begin{align} \nonumber
\check{\mathfrak{K}}_\delta=\mathfrak{K}[H_\delta]\circ\Upsilon^{-1}&=\Big[\Big(\mathfrak{K}\big[a[F]_\delta\big]-\big(\mathfrak{K}[a]\circ(\tau_{z_\gamma(\delta)}\otimes\tau_{z_\gamma(\delta)})\big)\Big)\circ\Upsilon^{-1}\Big]\big[\big(1\otimes g[F_\delta]_\gamma\big)\circ\Upsilon^{-1}\big]\\ \nonumber
&=\Big(\mathfrak{K}_\delta-\big(\mathfrak{K}_0\circ(\tau_{z_\gamma(\delta)}\otimes1)\big)\Big)\big[\big(1\otimes g[F_\delta]_\gamma\big)\circ\Upsilon^{-1}\big]\\ \label{F-1}
&=\Big[\Big(\mathfrak{K}_\delta-\big(\mathfrak{K}_0\circ(\tau_{z_\gamma(\delta)}\otimes1)\big)\Big)(1\otimes\mathfrak{s}_N)\Big]\Big[\big[\big(1\otimes g[F_\delta]_\gamma\big)\circ\Upsilon^{-1}\big](1\otimes\mathfrak{s}_{-N})\Big].
\end{align}
with the last line valid for any $N\in\mathbb{N}$ and 
the first factor above being bounded for any $N\in\mathbb{N}$ due to the arguments using \eqref{F-est-kdelta-k0}. In fact, by \eqref{F-kdelta-k0} we can write:
\begin{align*}\nonumber
(\bb1\otimes\mathcal{F})\check{\mathfrak{K}}_\delta=\delta^{\kappa}\,&\Big[\underset{1\leq j\leq d}{\sum}\Big((\bb1\otimes\mathcal{F})\big[\mathfrak{D}_1\mathfrak{K}_0\big]_j\big)\star\Big((\bb1\otimes\mathcal{F})\big[\big(\delta^{1-\kappa}F_j-\gamma_j\big)\otimes\mathfrak{s}_{N}\big]\Big)\Big]\star\,\\
&\,\star\Big[(\bb1\otimes\mathcal{F})\Big(\big[\big(1\otimes g[F_\delta]_\gamma\big)\circ\Upsilon^{-1}\big](1\otimes\mathfrak{s}_{-N})\Big)\Big]
\end{align*} 
Concerning the second factor above we notice that:
\beq
\big[\big(1\otimes g[F_\delta]_\gamma\big)\circ\Upsilon^{-1}\big](z,v)=g\big(\delta^{(1-\kappa)}\,F(z-v/2)-\gamma\big)
\eeq 
and using the compactness of the support of the cut-off function $g$ we deduce that on the support of the function $(\bb1\otimes\mathcal{F})\Big(\big[\big(1\otimes g[F_\delta]_\gamma\big)\circ\Upsilon^{-1}\big](1\otimes\mathfrak{s}_{-N})\Big)$ there exists some $L>0$, depending only on the diameter of the support of $g$ such that:
\begin{align*}
L\,&\geq\big|\delta^{(1-\kappa)}F(z-v/2)-\gamma\big|=\Big|\delta^{(1-\kappa)}\Big(F(z)-\int_0^1ds\,\big[(v/2)\cdot\big(\nabla\,F\big)(z-sv/2)\big]\Big)-\gamma\Big|\geq\\
&\geq\Big|\big|\delta^{(1-\kappa)}F(z)-\gamma\big|\,-\,\Big|\delta^{(1-\kappa)}\int_0^1ds\,\big[(v/2)\cdot\big(\nabla\,F\big)(z-sv/2)\big]\Big|\,\Big|
\end{align*}
and thus we have the inequality:
\beq\nonumber
\big|\delta^{(1-\kappa)}F(z)-\gamma\big|\,\leq\,L+\delta^{(1-\kappa)}\Big|\int_0^1ds\,\big[(v/2)\cdot\big(\nabla\,F\big)(z-sv/2)\big]\Big|\,\leq\,L+\delta^{(1-\kappa)}\big((1/2)\|\nabla\,F\|_\infty\big)<v>.
\eeq

Moreover, one easily notices that the function $\Big(\big[\big(1\otimes g[F_\delta]_\gamma\big)\circ\Upsilon^{-1}\big](1\otimes\mathfrak{s}_{-N})\Big)$ is of class $BC^\infty(\R^d\times\R^d)$ having rapid decay in the second variable, with uniform bounds with respect to $\delta\in[0,1]$, so that its partial Fourier transform  $(\bb1\otimes\mathcal{F})\Big(\big[\big(1\otimes g[F_\delta]_\gamma\big)\circ\Upsilon^{-1}\big](1\otimes\mathfrak{s}_{-N})\Big)$ is a function of class  $BC^\infty(\R^d\times\R^d)=S^0_{0,0}(\R^d\times\R^d)$, uniformly with respect to $\delta\in[0,1]$.

Recalling now our boundedness criterion \eqref{Cr-Bd}:
\begin{align}\nonumber
&\Big\|\Big[\big(K_\delta-\z\bb1\big)\,-\,\tau_{-z_\gamma(\delta)}\,\big(K_0-\z\bb1\big)^{-1}\tau_{z_\gamma(\delta)}\Big]\,g[F_\delta]_\gamma\Big\|_{\mathbb{B}(L^2(\R^d))}\leq\nu_{3d+4,3d+4}\big((\bb1\otimes\mathcal{F})\check{\mathfrak{K}}_\delta\big),
\end{align}
the conclusion of the Lemma follows.
\end{proof}

\begin{lemma} For any $\gamma\in\Z^d$ we have the estimation:
	$$\Big\|\Big[\tau_{-z_\gamma(\delta)}\,\big(K_0-\z\bb1\big)^{-1}\tau_{z_\gamma(\delta)}\,,\,G[F_\delta]_\gamma\Big]\Big\|_{\mathbb{B}(L^2(\R^d))}
	\,\leq\,C(a,F)\,\delta^{(1-\kappa)}.
	$$
\end{lemma}
\begin{proof}
In a similar way with the proof of our previous Lemma \ref{L-1} we consider the linear operator:
\beq\begin{split}
&\Big[\tau_{-z_\gamma(\delta)}\,\big(K_0-\z\bb1\big)^{-1}\tau_{z_\gamma(\delta)}\,,\,G[F_\delta]_\gamma\Big]=\\
&\hspace*{2cm}=\tau_{-z_\gamma(\delta)}\,\big(K_0-\z\bb1\big)^{-1}\tau_{z_\gamma(\delta)}\,,\,G[F_\delta]_\gamma\,-\,\,G[F_\delta]_\gamma\,\tau_{-z_\gamma(\delta)}\,\big(K_0-\z\bb1\big)^{-1}\tau_{z_\gamma(\delta)}
\end{split}\eeq
and its distribution kernel:
\beq
\mathfrak{K}_{C,\delta}\,:=\,\Big(\mathfrak{K}[a]\circ(\tau_{z_\gamma(\delta)}\otimes\tau_{z_\gamma(\delta)})\Big)\Big[\big(1\otimes g[F_\delta]_\gamma\big)\,-\,\big(g[F_\delta]_\gamma\otimes 1\big)\Big]
\eeq
with the modified form:
\beq
\widetilde{\mathfrak{K}}_{C,\delta}:=\mathfrak{K}_{C,\delta}\circ\Upsilon^{-1}=\big[\mathfrak{K}_0\circ(\tau_{z_\gamma(\delta)}\otimes\bb1)\big]\Big[\Big(\big(1\otimes g[F_\delta]_\gamma\big)\circ\Upsilon^{-1}\Big)\,-\,\Big(\big(g[F_\delta]_\gamma\otimes 1\big)\circ\Upsilon^{-1}\Big)\Big].
\eeq
Let us analyse the smooth function in the second factor above:
\begin{align}\nonumber
&\Big[\Big(\big(1\otimes g[F_\delta]_\gamma\big)\circ\Upsilon^{-1}\Big)\,-\,\Big(\big(g[F_\delta]_\gamma\otimes 1\big)\circ\Upsilon^{-1}\Big)\Big](z,v)=\\ \nonumber
&=g\big(\delta^{(1-\kappa)}F(z-v/2)-\gamma\big)\,-\,g\big(\delta^{(1-\kappa)}F(z+v/2)-\gamma\big)=\\ \nonumber
&=-\int_{0}^{1}\hspace*{-0,3cm}ds\,\big((\nabla\,g)\big(\delta^{(1-\kappa)}F(z-v/2)-\gamma+s\delta^{(1-\kappa)}\big(F(z+v/2)-F(z-v/2)\big)\cdot\\ \nonumber
&\hspace*{5cm}\cdot(\delta^{(1-\kappa)})\big(F(z+v/2)-F(z-v/2)\big)=\\
&=-\delta^{(1-\kappa)}\underset{1\leq j,k\leq d}{\sum}\int_{0}^{1}\hspace*{-0,3cm}ds\int_{-1/2}^{1/2}\hspace*{-0,3cm}dt\,v_k\,\partial_kF_j(z+sv)\,\times\\ \nonumber
&\hspace*{5cm}\times\,\big(\partial_j\,g\big)\big(\delta^{(1-\kappa)}F(z-v/2)-\gamma+s\delta^{(1-\kappa)}\big(F(z+v/2)-F(z-v/2)\big)
\end{align}
and our usual boundedness criterion clearly implies the conclusion of the Lemma.
\end{proof}
Putting together \eqref{Step-1}, Remark \ref{R-T-gamma} and the above two lemmas, and optimizing the estimation by taking $\kappa=1-\kappa=1/2$ we conclude that:
\beq\label{Step-2}\begin{split}
	&\big(K_\delta-\z\bb1\big)\,G[F_\delta]_\gamma\,T_\gamma(\z;\delta)\,G[F_\delta]_\gamma-\big[g[F_\delta]_\gamma\big]^2\,\bb1\,=\,X^{(\delta)}_\gamma\,G[F_\delta]_\gamma,\\
	&\Big\|X^{(\delta)}_\gamma\Big\|_{\mathbb{B}(L^2(\R^d))}\,\leq\,C(a,F)\,\delta^{1/2}\,\big(\dist\big(\z,\sigma(K_0)\big)^{-1}.
\end{split}\eeq

Finally we have to use the fact that $\underset{\gamma\in\Z^d}{\sum}g(x-\gamma)^2=1$ and $\underset{\gamma\in\Z^d}{\sum}g(x-\gamma)\in[0,\mathfrak{n}_g]$, both series being locally finite, so that the finite sums in \eqref{Step-0} are convergent and summing up over $\gamma\in\Z^d$ using the estimation \eqref{Step-2} clearly implies the conclusion of the Proposition.
\end{proof}

\paragraph{End of the proof of Theorem \ref{T-A}.} If $\dist\big(\z,\sigma\big(\Op(a)\big)\big)\geq\,C\delta^{1/2}$
the conclusion of Proposition \ref{P-main-est} implies that $\z\notin\sigma\big(K_\delta\big)$.

Finally, replacing $\widetilde{T}(\z;\delta)$ in \eqref{DF-qRez-delta} by:
\beq\label{DF-qRez-delta-inv}
\widetilde{S}(\z;\delta):=\underset{\gamma\in\Z^d}{\sum}G[F_\delta]_\gamma\,\tau_{z_\gamma(\delta)}\,\big(K_\delta-\z\bb1\big)^{-1}\tau_{-z_\gamma(\delta)}\,G[F_\delta]_\gamma
\eeq
all the arguments above can still be applied in order to obtain the following estimation similar to the conclusion of Proposition \ref{P-main-est}:
\beq
\Big\|\big(K_0-\z\bb1\big)\,\widetilde{S}(\z;\delta)\,-\,\bb1\Big\|_{\mathbb{B}(L^2(\R^d))}\,\leq\,C(a,F)\,\delta^{1/2}\,\big(\dist\big(\z,\sigma(K_\delta)\big)^{-1}.
\eeq
It follows that if $\dist\big(\z,\sigma\big(\Op^w\big(a[F]_\delta\big)\big)\big)\geq\,C\delta^{1/2}$
then $\z\notin\sigma\big(K_0\big)$ and the Theorem is proven.

\section{Proof of Theorem \ref{T-B}}

In this case, we shall no longer estimate norms, but rather quadratic forms.
The main idea is to replace the perturbation $x\mapsto x+\delta\,F(x)$ with a similar one in a new variable $u\in\R^d$, namely $x\mapsto x+\delta\,F(u)$ and use the unitarity of translations in estimating the modified quadratic form.
In order to control the distance between the new variable $u$ and $z:=(x+y)/2$ we shall use a scaled weight function $\mathfrak{W}_{\kappa}(z-u)$ as in \cite{CP3} (see \eqref{DF-W}). {\  We shall treat only the case $\mathcal{E}_+(\delta)-\mathcal{E}_+(0)$, the other one, i.e. $\mathcal{E}_-(\delta)-\mathcal{E}_-(0)$ following by a quite similar argument.}

We intend to estimate the difference $\mathcal{E}_+(\delta)-\mathcal{E}_+(0)$ for $\delta>0$ small enough, and begin by making explicit the defining formula \eqref{F-spedgedelta}: 
\beq\begin{split}\label{F-Eplus-delta}
	\mathcal{E}_+(\delta)&=\underset{\|\phi\|_{L^2(\R^d)}=1}{\sup}\big(\phi\,,\,\Op\big(a[F]_\delta\big)\,\phi\big)_{L^2(\R^d)}=\underset{\|\phi\|_{L^2(\R^d)}=1}{\sup}\big\langle\mathfrak{K}\big[a[F]_\delta\big]\,,\,\overline{\phi}\otimes\phi\big\rangle_{\mathscr{S}(\R^d\times\R^d)}\\
	&=\underset{\|\phi\|_{L^2(\R^d)}=1}{\sup}\big\langle\mathfrak{K}_\delta\,,\,(\overline{\phi}\otimes\phi)\circ\Upsilon^{-1}\big\rangle_{\mathscr{S}(\R^d\times\R^d)}.
\end{split}\eeq

\paragraph{The weight function.}  Let us consider the functions:
\beq\label{DF-W}
\mW(z):=(4\pi)^{-d/2}\, e^{-\frac{|z|^2}{4}},\qquad\mW_\kappa(z):=\kappa^{d/2}\mW(\kappa\,z),\ \forall\kappa\in(0,1]
\eeq
and the following identity:
\beq
2^{-1}\big( |w+v/2|^2 + |w-v/2|^2\big)=|w|^2+|v|^2/4,\qquad\forall(w,v)\in\R^d\times\R^d.
\eeq
We deduce that:
\beq\begin{split}\label{F-weight-dec}
	&\int_{\R^d}dz\,\mW_\kappa(z)\ =\ 1,\quad\forall\kappa\in(0,1],\\
	&\mW_\kappa(z-u)\,=\,\big((\kappa/4\pi)^{-d/2}\mW_\kappa(v)\big)^{-1/4}\mW_\kappa(z-u+v/2)^{1/2}\mW_\kappa(z-u-v/2)^{1/2},\\ &\hspace*{9cm}\forall(z,u,v,\kappa)\in[\R^d]^3\times(0,1].
\end{split}\eeq

Our strategy is to replace in formula \eqref{F-Eplus-delta} the distribution:
\beq
\mathfrak{K}_\delta\,=\,\Big(\int_{\R^d}du\,\big((\tau_u\mW_\kappa)\otimes1\big)\Big)\,\mathfrak{K}_\delta,
\eeq
with the distribution:
\beq
\mW_\kappa[\mathfrak{K}_\delta]:=\int_{\R^d}du\,\big((\tau_u\mW_\kappa)\otimes1\big)\Big(\big(\tau_{-\delta\,F(u)}\otimes\bb1\big)\mathfrak{K}_0\Big)
\eeq
and estimate:
\beq\label{DF-def-spedge}
\widetilde{\mathcal{E}_+}(\kappa,\delta):=\underset{\|\phi\|_{L^2}=1}{\sup}\big\langle\mW_\kappa[\mathfrak{K}_\delta]\,,\,(\overline{\phi}\otimes\phi)\circ\Upsilon^{-1}\big\rangle_{\mathscr{S}(\R^d\times\R^d)}.
\eeq

\begin{proposition}\label{P-B1}
	With the above notations and hypothesis, for any $\phi\in\mathscr{S}(\R^d)$ there exists some $C(a,F)>0$ such that we have the estimation:
	$$
	\big\langle\mW_\kappa[\mathfrak{K}_\delta]\,,\,(\overline{\phi}\otimes\phi)\circ\Upsilon^{-1}\big\rangle_{\mathscr{S}(\R^d\times\R^d)}=\big(\phi\,,\,\Op(a)\,\phi\big)_{L^2(\R^d)}\,+\,C(a,F)\,\kappa^2\,\|\phi\|_{L^2}^2,\quad\forall\phi\in L^2(\R^d).
	$$
\end{proposition}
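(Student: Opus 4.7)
My plan is to unfold the pairing by a translation change of variables, apply an IMS-type localization at the level of Weyl symbols, and then use the Gaussian structure of $\mW_\kappa$ together with translation unitarity to reduce the main term to $(\phi,\Op(a)\phi)$ up to an $O(\kappa^2)$ correction.

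First, I would perform the substitution $z\mapsto z+\delta F(u)$ in the inner $(z,v)$-integration defining $\mW_\kappa[\mathfrak{K}_\delta]$, for each fixed $u$. Setting $\phi_u:=\tau_{\delta F(u)}\phi$ and $\tilde u:=u+\delta F(u)$, the LHS becomes
\[
\int_{\R^d}du\,\bigl(\phi_u,\Op^w\bigl(\mW_\kappa(\cdot-\tilde u)\,a(\cdot,\xi)\bigr)\phi_u\bigr)_{L^2}.
\]

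Next, introducing $f_u(x):=\sqrt{\mW_\kappa(x-\tilde u)}$ and expanding the Moyal product yields
\[
\Op^w\bigl(\mW_\kappa(\cdot-\tilde u)\,a\bigr)=M_{f_u}\,\Op(a)\,M_{f_u}+\Op^w(S_u),
\]
where $S_u$ is the second-order Moyal remainder, built from $\partial_x^\alpha f_u$ and $\partial_\xi^\alpha a$ with $|\alpha|\ge 1$. The Gaussian scaling of $\mW_\kappa$ (width $\kappa^{-1}$) yields $\int du\,\|\partial_x^\alpha f_u\|^2_{L^\infty}=O(\kappa^{2|\alpha|})$ uniformly in $x$, which combined with the Calder\'on--Vaillancourt estimate \eqref{F-CV-est} applied to $\int du\,S_u$ gives $\bigl|\int du\,(\phi_u,\Op^w(S_u)\phi_u)\bigr|\le C(a,F)\kappa^2\|\phi\|^2$.

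For the main term, the identity $M_{f_u}\phi_u=\tau_{\delta F(u)}(g_u\phi)$ with $g_u(x):=\sqrt{\mW_\kappa(x-u)}$ (which follows from $f_u(x)=g_u(x-\delta F(u))$) and the Gaussian kernel identity $\int du\,g_u(x)g_u(y)=e^{-\kappa^2|x-y|^2/16}$ allow one, after the change of variables $u\mapsto\tilde u$ with Jacobian $1+O(\delta)$, to identify the main term as $(\phi,\Op^w(a_\kappa)\phi)+O(\delta)\|\phi\|^2$, where $a_\kappa(x,\xi)$ is $a$ smoothed by a Gaussian of width $\kappa$ in the $\xi$-variable. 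A second-order Taylor expansion of $a$ in $\xi$ (valid since $a\in S^0_{0,0}$) gives $a_\kappa-a=O(\kappa^2)$ in $L^\infty$, and the $O(\delta)$ Jacobian correction is absorbed into $O(\kappa^2)$ for $\delta$ small enough.

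The main technical obstacle is the quantitative bookkeeping of the Moyal remainder $S_u$ combined with the Jacobian error in the $u\mapsto\tilde u$ substitution, so that both together produce an $O(\kappa^2)$ bound with constants depending only on the H\"ormander seminorms of $a$ and the sup-norms of the derivatives of $F$.
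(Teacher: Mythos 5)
Your opening move (the substitution $z\mapsto z+\delta F(u)$, arriving at $\int du\,(\phi_u,\Op^w(\mW_\kappa(\cdot-\tilde u)\,a)\phi_u)$) and the Gaussian kernel identity $\int du\,g_u(x)g_u(y)=e^{-\kappa^2|x-y|^2/16}$ are both correct and parallel to what the paper does. But from there you take a genuinely different decomposition: you split $\Op^w(\mW_\kappa(\cdot-\tilde u)a)$ by a Moyal/IMS commutator expansion with $M_{f_u}$, whereas the paper uses the exact Gaussian factorization
\[
\mW_\kappa(z-u)=e^{\kappa^2|v|^2/16}\,\mW_\kappa(z-u+v/2)^{1/2}\,\mW_\kappa(z-u-v/2)^{1/2},
\]
i.e.\ an identity at the level of the Schwartz kernel rather than an approximation at the level of symbols. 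This algebraic identity lets the paper write the bounded-region contribution exactly as $\int du\,\big(g_u\phi\,,\,\tau_{-\delta F(u)}\Op^w(a_\kappa)\tau_{\delta F(u)}(g_u\phi)\big)$, and then the \emph{only} place $\delta$ appears is inside a translation conjugation. Since $\tau_{-\delta F(u)}\Op^w(a_\kappa)\tau_{\delta F(u)}$ is unitarily equivalent to $\Op^w(a_\kappa)$, and $\int du\,\|g_u\phi\|^2=\|\phi\|^2$, the $\delta$-dependence is eliminated without any Taylor expansion of the kernel.

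The gap in your argument is exactly at the step where you reintroduce $\delta$. After $M_{f_u}\phi_u=\tau_{\delta F(u)}(g_u\phi)$, your main term is
\[
\int du\,\big(g_u\phi,\;\tau_{-\delta F(u)}\Op(a)\tau_{\delta F(u)}(g_u\phi)\big)
=\int du\!\iint dx\,dy\; g_u(x)\overline{\phi(x)}\,\mathfrak{K}[a](x+\delta F(u),y+\delta F(u))\,g_u(y)\phi(y).
\]
You propose to Taylor-expand $\mathfrak{K}[a](x+\delta F(u),y+\delta F(u))$ around $\mathfrak{K}[a](x,y)$ and absorb the rest into $O(\delta)\|\phi\|^2$. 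But the Taylor remainder carries the factor $\delta\,F(u)$, and $F\in C^\infty_1(\R^d;\R^d)$ is allowed to grow linearly; only the \emph{derivatives} of $F$ are bounded. After integrating against $g_u(x)g_u(y)$ (supported at $u\approx x\approx y$), that factor effectively becomes $\delta\,F(x)$, which is not controlled by $\|\phi\|^2$. So the claimed $O(\delta)\|\phi\|^2$ bound is not available, and even where $F$ is bounded the resulting error is $O(\delta)$, not $O(\kappa^2)$ --- the Proposition is supposed to hold with a constant $C(a,F)$ independent of $\delta\in[0,1]$, so "absorb into $O(\kappa^2)$ for $\delta$ small enough" is not a valid closure. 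A smaller point: the quantity $\int du\,\|\partial^\alpha f_u\|_{L^\infty}^2$ you write for the Moyal remainder diverges because $\|\partial^\alpha f_u\|_{L^\infty}$ is $u$-independent; what you need is the pointwise integral $\int du\,|\partial^\alpha f_u(x)|^2=O(\kappa^{2|\alpha|})$, which indeed holds. To repair the argument you would have to avoid expanding the translated kernel in $\delta$ altogether and instead exploit that the translation conjugation is a unitary equivalence, precisely as the paper does.
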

\begin{proof}
	Starting from \eqref{DF-def-spedge}, we have to estimate the following iterated integrals:
	\begin{align}
		\left|\int_{\R^d}du\int_{\R^d}dz\,\mW_\kappa(z-u)\int_{\R^d}dv\,\overline{\phi(z+v/2)}\phi(z-v/2)\,\mathfrak{K}_0\big(z+\delta F(u),v\big)\right|.
	\end{align}
	
	We shall use the rapid decay in $v\in\R^d$ of the kernel $\mathfrak{K}_0\big(z+\delta F(u),v\big)$ by breaking the integral in $v\in\R^d$ in a bounded region and its complementary. In fact we shall choose some function $\chi\in C^\infty_0(\R^d)$, taking values in $[0,1]$, having support in the ball $|v|\leq R$ and being equal to 1 on the ball $|v|\leq r$, for some strictly positive $r<R$.
	
	Let us first estimate the integral on the unbounded region, for any $\kappa\in(0,1)$ and $N\in\mathbb{N}$:
	\begin{align*}
		\left|\int_{\R^d}du\int_{\R^d}dz\,\mW_\kappa(z-u)\int_{\R^d}dv\,\overline{\phi(z+v/2)}\phi(z-v/2)\,\mathfrak{K}_0\big(z+\delta F(u),v\big)<v>^N<v>^{-N}[1-\chi(\kappa\,v)]\right|\\
		\leq\,\kappa^{N}\Big(r^{-N}\underset{z\in\R^d}{\sup}\,\underset{|v|\geq r}{\sup}<v>^N\big|\mathfrak{K}_0\big(z,v\big)\big|\Big)\,\|\phi\|_{L^2(\R^d)}^2\,\leq\,C_r(a)\,\kappa^N\,\|\phi\|_{L^2(\R^d)}^2.
	\end{align*}
	
	On the support of $\chi$ we shall use the second formula in \eqref{F-weight-dec} in order to write that:
	\begin{align}
		&\int_{\R^d}du\int_{\R^d}dz\,\mW_\kappa(z-u)\int_{\R^d}dv\,\overline{\phi(z+v/2)}\phi(z-v/2)\,\mathfrak{K}_0\big(z+\delta F(u),v\big)\,\chi(\kappa\,v)=\\ \nonumber
		&\hspace*{4cm}=\Big(\widetilde{\phi}\,,\,\big(\bb1_{\R^d_u}\otimes\tau_{-\delta F(u)}\big)\big(\bb1_{\R^d_u}\otimes\Op^w(a_\kappa)\big)\big(\bb1_{\R^d_u}\otimes\tau_{\delta F(u)}\big)\widetilde{\phi}\Big)_{L^2(\R^d;L^2(\R^d))}
	\end{align}
	with:
	\begin{align*}
		\big(\mathfrak{K}[a_\kappa]\circ\Upsilon\big)(z,v):&=\big((\kappa/4\pi)^{-d/2}\mW_\kappa(v)\big)^{-1/4}\mathfrak{K}_0(z,v)\,\chi(\kappa\,v)=e^{|\kappa\,v|^2/16}\,\chi(\kappa\,v)\,\mathfrak{K}_0(z,v),\\ \nonumber
		&=\mathfrak{K}_0(z,v)\,\chi(\kappa\,v)\,+\,\kappa^2\int_0^1ds\,(|v|^2/16)\,e^{s|\kappa\,v|^2/16}\,\chi(\kappa\,v)\,\mathfrak{K}_0(z,v),\\
		\widetilde{\phi}:&=\big(\tau_{-u}\mW_\kappa\big)\,\phi\in\,L^2\big(\R^d_u;L^2(\R^d_z)\big).
	\end{align*}
	We notice that we have a unitary map 
	\beq
	L^2(\R^d)\ni\phi\,\mapsto\,\widetilde{\phi}:=\big(\tau_{-u}\mW_\kappa\big)\,\phi\in\,L^2\big(\R^d_u;L^2(\R^d_z)\big)
	\eeq
	and the equality (taking into account the unitarity of translations):
	\begin{align*}
		\Big(\widetilde{\phi}\,,\,\big(\bb1_{\R^d_u}\otimes\tau_{-\delta F(u)}\big)\big(\bb1_{\R^d_u}\otimes\Op^w(a_\kappa)\big)\big(\bb1_{\R^d_u}\otimes\tau_{\delta F(u)}\big)\widetilde{\phi}\Big)_{L^2(\R^d;L^2(\R^d))}=\Big(\phi\,,\,\Op^w(a_\kappa)\phi\Big)_{L^2(\R^d)}.
	\end{align*}
	Thus if we change the Hilbert space $L^2(\R^d)$ with the Hilbert space $L^2\big(\R^d;L^2(\R^d)\big)$ via the above unitary map we may conclude that:
	\begin{align*}
		&\int_{\R^d}du\int_{\R^d}dz\,\mW_\kappa(z-u)\int_{\R^d}dv\,\overline{\phi(z+v/2)}\phi(z-v/2)\,\mathfrak{K}_0\big(z+\delta F(u),v\big)\,\chi(\kappa\,v)=\\
		&\hspace*{0.2cm}=\Big(\phi\,,\,\Op^w(a_\kappa)\phi\Big)_{L^2(\R^d)}\ =\ \Big(\phi\,,\,\Op^w(a_\chi)\phi\Big)_{L^2(\R^d)}\,+\\ \nonumber
		&\hspace*{0.6cm}+\kappa^2\int_{\R^d}dz\int_{\R^d}dv\,\overline{\phi(z+v/2)}\phi(z-v/2)\Big(\int_0^1ds\,e^{s|\kappa\,v|^2/16}\Big)\,(|v|^2/16)\,\mathfrak{K}_0\big(z+\delta F(u),v\big)\,\chi(\kappa\,v)
	\end{align*}
	where we have put into evidence the symbol $a_\chi\in S^0_{0,0}(\R^d\times\R^d)$ associated to the integral kernel $\mathfrak{K}[a](x,y)\chi\big(\kappa(x-y)\big)$. Then we may control the factor $(|v|^2/16)$ using the rapid decay of $\mathfrak{K}_0$ with respect to the variable $v\in\R^d$ and write that $\exp\big(s|\kappa\,v|^2/16\big)\chi(\kappa\,v)\leq\,\exp(R^2/16)$.
	Finally we use once again the estimation on the support of $1-\chi$:
	\begin{align*}
		&\big(\phi\,,\,\Op(a_\chi)\,\phi\big)_{L^2(\R^d)}\,=\\
		&\hspace*{0,5cm}=\,\big(\phi\,,\,\Op(a)\,\phi\big)_{L^2(\R^d)}\,-\,\int_{\R^d}dz\,\int_{\R^d}dv\,\overline{\phi(z+v/2)}\phi(z-v/2)\,\mathfrak{K}_0\big(z,v\big)[1-\chi(\kappa\,v)]\\
		&\Big|\big(\phi\,,\,\Op(a)\,\phi\big)_{L^2(\R^d)}\,-\,\int_{\R^d}dz\,\int_{\R^d}dv\,\overline{\phi(z+v/2)}\phi(z-v/2)\,\mathfrak{K}_0\big(z,v\big)[1-\chi(\kappa\,v)]\Big|\,\leq\\
		&\hspace*{2cm}\leq\,\kappa^{N}\Big(r^{-N}\underset{z\in\R^d}{\sup}\,\underset{|v|\geq r}{\sup}<v>^N\big|\mathfrak{K}_0\big(z,v\big)\big|\Big)\,\|\phi\|_{L^2(\R^d)}^2\,\leq\,C(a)\,\kappa^N\,\|\phi\|_{L^2(\R^d)}^2.
	\end{align*}
\end{proof}
This Proposition clearly implies the estimation:
\beq\label{E-1}\ 
\big|\widetilde{\mathcal{E}_+}(\delta)\,-\,\mathcal{E}_+(0)\big|\,=\,\mathcal{O}(\kappa^2).
\eeq
\begin{proposition}
There exists $C(a,F)>0$ such that for any $(\kappa,\theta)\in(0,1)\times(0,1)$ and for any $\phi\in\,L^2(\R^d)$:
\beq\begin{split}
&\big\langle\mathfrak{K}_\delta\,,\,(\overline{\phi}\otimes\phi)\circ\Upsilon^{-1}\big\rangle_{\mathscr{S}(\R^d\times\R^d)}\,-\,\big\langle\mW[\mathfrak{K}_\delta]\,,\,(\overline{\phi}\otimes\phi)\circ\Upsilon^{-1}\big\rangle_{\mathscr{S}(\R^d\times\R^d)}=\\
&\hspace*{7cm}=\,C(a,F)\,\|\phi\|_{L^2(\R^d)}^2\,\big(\delta/\theta+\delta\kappa^{-2}\theta^{1+\mu}+\delta^2\kappa^{-2}\big).
\end{split}\eeq
\end{proposition}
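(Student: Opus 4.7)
The plan is to Taylor-expand the kernel difference in $\delta$, isolate a first-order term and a quadratic-in-$\delta$ remainder, and then treat the first-order term by splitting the spatial variable $z$ into a ``near'' and a ``far'' region at scale $\theta^{-1}$. Throughout I will work with the quadratic form directly (rather than with operator norms) via the standard estimate
\[
\big|\langle E,(\bar\phi\otimes\phi)\circ\Upsilon^{-1}\rangle\big|\leq\|\phi\|_{L^2}^2\,\sup_{x\in\R^d}\int_{\R^d}dv\,|E(x-v/2,v)|,
\]
which follows by Cauchy--Schwarz after the change of variable $x=z+v/2$ and by the rapid decay in $v$ of $\mathfrak{K}_0$ and its derivatives (cf.\ Proposition~\ref{P-distr-ker}).

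Using $\int\mW_\kappa=1$ and Taylor-expanding $\mathfrak{K}_0(z+\delta F(u),v)$ around $z+\delta F(z)$ in its first argument, I would write
\[
\mathfrak{K}_\delta(z,v)-\mW_\kappa[\mathfrak{K}_\delta](z,v)=\delta\,\nabla_z\mathfrak{K}_0(z+\delta F(z),v)\cdot\Phi_\kappa(z)+R(z,v),
\]
with $\Phi_\kappa(z):=F(z)-(\mW_\kappa\ast F)(z)$ and a quadratic-in-$\delta$ remainder bounded pointwise by $|R(z,v)|\leq C\delta^2\int\mW_\kappa(z-u)|F(u)-F(z)|^2\,du\cdot\sup_y|\nabla_z^2\mathfrak{K}_0(y,v)|$. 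The Lipschitz bound $|F(u)-F(z)|\leq\|\nabla F\|_\infty|u-z|$ together with $\int\mW_\kappa(w)|w|^2\,dw\leq C\kappa^{-2}$ will then yield the last contribution $C\delta^2\kappa^{-2}\|\phi\|^2$.

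For the linear-in-$\delta$ term I will fix $\chi\in C_0^\infty(\R^d;[0,1])$ equal to $1$ on $\{|z|\leq 1\}$, set $\chi_\theta(z):=\chi(\theta z)$, and split $\Phi_\kappa=\chi_\theta\Phi_\kappa+(1-\chi_\theta)\Phi_\kappa$. On the far region $\{|z|\geq\theta^{-1}\}$ I would apply second-order Taylor to $F$ against the even weight $\mW_\kappa(w)$: the linear-in-$w$ piece $w\cdot\nabla F(z)$ vanishes by symmetry, while the Hessian remainder, combined with the decay hypothesis $|H[F](y)|\leq C\langle y\rangle^{-(1+\mu)}$ and the fact that $\langle z+tw\rangle\geq c\theta^{-1}$ on the effective Gaussian support of $\mW_\kappa$, will give $|(1-\chi_\theta)(z)\Phi_\kappa(z)|\leq C\kappa^{-2}\theta^{1+\mu}$ and hence the middle contribution $C\delta\kappa^{-2}\theta^{1+\mu}\|\phi\|^2$. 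On the near region $\{|z|\leq 2\theta^{-1}\}$, I will discard the second-order expansion and exploit only the linear growth of $F$: $|F(z)|\leq C(1+|z|)\leq C\theta^{-1}$ on $\supp\,\chi_\theta$, and $|(\mW_\kappa\ast F)(z)|\leq C(1+|z|+\kappa^{-1})\leq C\theta^{-1}$ in the regime $\kappa\geq\theta$ (the only one in which the final answer of Theorem~\ref{T-B} is not already dominated by $\delta\kappa^{-2}\theta^{1+\mu}$), producing $|\chi_\theta(z)\Phi_\kappa(z)|\leq C\theta^{-1}$ and hence the first contribution $C\delta\theta^{-1}\|\phi\|^2$.

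The main technical point---and the reason to favour the direct quadratic-form estimate over an operator-norm route via Calder\'on--Vaillancourt---is that the pointwise decay $|\Phi_\kappa|\leq C\kappa^{-2}\theta^{1+\mu}$ on the far region does not survive differentiation in $z$: already $\partial_j\Phi_\kappa=\partial_jF-(\mW_\kappa\ast\partial_jF)$ satisfies only $|\partial_j\Phi_\kappa|\leq C\kappa^{-2}$, because the decay hypothesis controls $H[F]$ but not $H[\partial_jF]$. The estimate from the first paragraph bypasses this obstacle by requiring only the sup-norm of $\Phi_\kappa$ together with the uniform $v$-integrability of $\nabla_z\mathfrak{K}_0(y,v)$.
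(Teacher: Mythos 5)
Your reorganization via $\Phi_\kappa(z):=F(z)-(\mW_\kappa\ast F)(z)$ is a genuinely different decomposition from the paper's. The paper cuts $F$ \emph{itself} into $F_\theta=\chi_\theta F$ and $F_\theta^\bot=(1-\chi_\theta)F$ \emph{before} Taylor-expanding, then handles the $F_\theta$-part by verifying that the associated symbols $a'_{\delta,\theta},a''_{\delta,\theta}$ lie in $S^0_{0,0}$ with seminorms $O(\theta^{-1})$ (Calder\'on--Vaillancourt), and writes explicit $\mathcal{I}_1,\mathcal{I}_2$ integrals for the $F_\theta^\bot$-part. Your version, where the parity cancellation and the Hessian decay are applied directly to $\Phi_\kappa$ and the cutoff $\chi_\theta$ appears only multiplicatively outside all Taylor remainders, is arguably cleaner: no derivatives ever fall on $\chi_\theta$, so one does not have to worry about $\nabla^2(\chi_\theta F)$ contributing only $O(\theta)$ rather than $O(\theta^{1+\mu})$ on the transition shell. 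Your diagnosis that the sup-norm bound $|\Phi_\kappa|\lesssim\kappa^{-2}\theta^{1+\mu}$ on the far region does \emph{not} propagate to higher $z$-derivatives (absent control on $\nabla^3F$) is also correct and is a real obstruction to a naive operator-norm argument there.

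The main gap is the Schur-type estimate you rely on,
$\big|\langle E,(\bar\phi\otimes\phi)\circ\Upsilon^{-1}\rangle\big|\le\|\phi\|_{L^2}^2\sup_x\int|E(x-v/2,v)|\,dv$.
This requires $E(z,\cdot)$ to be a function integrable in $v$, but for $a\in S^0_{0,0}(\Rd\times\Rd)$ the kernel $\mathfrak{K}_0(z,\cdot)$ (and hence $\nabla_z\mathfrak{K}_0(z,\cdot)$) is in general a tempered distribution with genuine singular support at $v=0$; for instance $a\equiv 1$ gives $\mathfrak{K}_0(z,v)=\delta_0(v)$. Proposition~\ref{P-distr-ker}, which you cite, guarantees rapid decay only \emph{away} from $v=0$, not integrability near $v=0$. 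The paper avoids this by cutting with $\chi(\kappa v)$ near $v=0$ (as in the proof of Proposition~\ref{P-B1}) and then working at the symbol level with Calder\'on--Vaillancourt, which is exactly what your plan is trying to bypass. You would need to reinstate such a $v$-cutoff, and on the $|v|\lesssim 1$ piece the operator has Weyl symbol $m(z)\,\nabla_z a(z+\delta F(z),\eta)$ with $m=(1-\chi_\theta)\Phi_\kappa$; bounding its quadratic form by $\|m\|_\infty$ is not automatic, precisely because $m\in L^\infty$ alone does not give $\Op^w(m\,c)\in\mathbb{B}(L^2)$ even when $c\in S^0_{0,0}$. So the far-region step still requires a substitute argument (e.g.\ a Fourier-multiplier decomposition of $m$, or a Cotlar--Stein/almost-orthogonality scheme as in the paper's Section~3), and as written the plan does not supply one.

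A secondary issue: you restrict to $\kappa\ge\theta$ and assert that otherwise the middle term $\delta\kappa^{-2}\theta^{1+\mu}$ dominates. That is false in the band $\theta^{1+\mu}<\kappa<\theta$, where $\delta\kappa^{-1}$ is larger than both $\delta\theta^{-1}$ and $\delta\kappa^{-2}\theta^{1+\mu}$. Since the Proposition is asserted for all $(\kappa,\theta)\in(0,1)^2$, and since the optimal choice in Theorem~\ref{T-B} ($\kappa=\delta^{\rho/2}$, $\theta=\delta^{1-\rho}$ with $\rho=(1+\mu)/(2+\mu)$) lands in the regime $\kappa<\theta$ as soon as $\mu>1$, this case cannot be waved away; one needs either the paper's cut $F_\theta=\chi_\theta F$ (whose sup-bound $|F_\theta|\lesssim\theta^{-1}$ has no $\kappa$-dependence) or a separate argument for $\kappa<\theta$.
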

\begin{proof}
\begin{align*}
	&\big\langle\mathfrak{K}_\delta\,,\,(\overline{\phi}\otimes\phi)\circ\Upsilon^{-1}\big\rangle_{\mathscr{S}(\R^d\times\R^d)}\,-\,\big\langle\mW[\mathfrak{K}_\delta]\,,\,(\overline{\phi}\otimes\phi)\circ\Upsilon^{-1}\big\rangle_{\mathscr{S}(\R^d\times\R^d)}=\\
	&\hspace*{0,2cm}=\int_{\R^d}du\int_{\R^d}dz\,\mW_\kappa(z-u)\int_{\R^d}dv\,\overline{\phi(z+v/2)}\phi(z-v/2)\,\left[\mathfrak{K}_0\big(z+\delta F(z),v\big)\,-\,\mathfrak{K}_0\big(z+\delta F(u),v\big)\right]\\
	&\hspace*{0,2cm}=-\delta\hspace*{-0,2cm}\int_{\R^d}\hspace*{-0,2cm}du\int_{\R^d}\hspace*{-0,2cm}dz\,\mW_\kappa(z-u)\hspace*{-0,2cm}\int_{\R^d}\hspace*{-0,2cm}dv\,\overline{\phi(z+v/2)}\phi(z-v/2)\,\big(\nabla_z\mathfrak{K}_0\big)\big(z+\delta F(z)+s\delta(F(u)-F(z)),v\big)\times\\
	&\hspace*{1cm}\times\,\Big[\int_0^1ds\,\big((z-u)\cdot\nabla F\big)\big(z+s(u-z)\big)\Big].
\end{align*}
We shall need a second cut-off, this time on the perturbing field $F\in C^\infty_1(\R^d)$. Let us consider the same function $\chi\in C^\infty_0(\R^d)$ as in the proof above and the weighted one $\chi_\theta(z):=\chi(\theta\,z)$ for some cut-off parameter $\theta\in(0,1]$. Then we define:
\beq
F_\theta:=\chi_\theta\,F,\qquad F_\theta^\bot:=(1-\chi_\theta)\,F
\eeq
and the corresponding integral kernels $\mathfrak{K}_\delta^\circ$ and $\mathfrak{W}[\mathfrak{K}_\delta]^\circ$ with $F$ replaced by $F_\theta$ and respectively $\mathfrak{K}_\delta^\bot$ and $\mathfrak{W}[\mathfrak{K}_\delta]^\bot$ with $F$ replaced by $F_\theta^\bot$.

We have the evident estimations:
\begin{align*}
&\big\langle\mathfrak{K}_\delta^\circ\,,\,(\overline{\phi}\otimes\phi)\circ\Upsilon^{-1}\big\rangle_{\mathscr{S}(\R^d\times\R^d)}-\big\langle\mathfrak{K}_0\,,\,(\overline{\phi}\otimes\phi)\circ\Upsilon^{-1}\big\rangle_{\mathscr{S}(\R^d\times\R^d)}=\\
&=\int_{\R^d}dz\int_{\R^d}dv\,\overline{\phi(z+v/2)}\phi(z-v/2)\,\Big[\mathfrak{K}_0\big(z+\delta F_\theta(z),v\big)-\mathfrak{K}_0\big(z,v\big)\Big]=\\
&=\delta\int_{\R^d}dz\int_{\R^d}dv\,\overline{\phi(z+v/2)}\phi(z-v/2)\int_0^1ds\,\big[F_\theta(z)\cdot\big(\partial_z\mathfrak{K}_0\big)\big(z+s\delta\,F_\theta(z),v\big)\big],
\end{align*}
\begin{align*}
&\big\langle\mathfrak{W}[\mathfrak{K}_\delta]^\circ\,,\,(\overline{\phi}\otimes\phi)\circ\Upsilon^{-1}\big\rangle_{\mathscr{S}(\R^d\times\R^d)}-\big\langle\mathfrak{K}_0\,,\,(\overline{\phi}\otimes\phi)\circ\Upsilon^{-1}\big\rangle_{\mathscr{S}(\R^d\times\R^d)}=\\
&=\Big|\int_{\R^d}du\int_{\R^d}dz\,\mW_\kappa(z-u)\int_{\R^d}dv\,\overline{\phi(z+v/2)}\phi(z-v/2)\,\Big[\mathfrak{K}_0\big(z+\delta F_\theta(u),v\big)-\mathfrak{K}_0\big(z,v\big)\Big]\Big|=\\
&=\delta\int_{\R^d}dz\int_{\R^d}dv\,\overline{\phi(z+v/2)}\phi(z-v/2)\,\Big[\int_{\R^d}du\,\mW_\kappa(z-u)\,\times\\
&\hspace{7cm}\times\,\Big(\int_0^1ds\,\big[F_\theta(u)\cdot\big(\partial_z\mathfrak{K}_0\big)\big(z+s\delta\,F_\theta(u),v\big)\big]\Big)\Big]
\end{align*}
{\  We estimate the above two differences in the next two Lemmas.}

\begin{lemma}
The symbol $a'_{\delta,\theta}(z,\eta)$ associated to the kernel $\mathfrak{K}_{\delta,\theta}'(z,v):=\int_0^1ds\,\big[F_\theta(z)\cdot\big(\partial_z\mathfrak{K}_0\big)\big(z+s\delta\,F_\theta(z),v\big)\big]$ belongs to $S^0_{0,0}(\R^d\times\R^d)$ with seminorms bounded by $C\theta^{-1}$ uniformly for $(\delta,\theta)\in(0,1]^2$.
\end{lemma}
\begin{proof}
We can write that: \begin{align*}
a'_{\delta,\theta}(z,\eta)&=(2\pi)^d\int_{\R^d}dv\,e^{-i<\eta,v>}\,\mathfrak{K}_{\delta,\theta}'(z,v)=\\
&=(2\pi)^{d/2}\int_{\R^d}dv\,e^{-i<\eta,v>}\,\int_0^1ds\,\big[F_\theta(z)\cdot\big(\partial_z(\bb1\otimes\mathcal{F}^-)a\big)\big(z+s\delta\,F_\theta(z),v\big)\big]\\
&=\int_0^1ds\,\big[F_\theta(z)\cdot(\partial_za)\big(z+s\delta\,F_\theta(z),\eta\big)\big].
\end{align*}
As in Remark \ref{R-est-aFdelta}, we notice that
\beq
\nu_{n,m}(a'_{\delta,\theta})\,\leq\,\big(\underset{z\in\R^d}{\sup}\big|F_\theta(z)\big|\big)\,\underset{0\leq s\leq1}{\sup}\nu_{n+1,m}(a[F]_s)\leq\,M_F\,\theta^{-1}\,\underset{0\leq s\leq1}{\sup}\nu_{n+1,m}(a[F]_s)
\eeq
\end{proof}
\begin{lemma}
	The symbol $a''_{\delta,\theta}(z,\eta)$ associated to the kernel 
	$$
	\mathfrak{K}_{\delta,\theta}''(z,v):=\int_{\R^d}du\,\mW_\kappa(z-u)\,\Big(\int_0^1ds\,\big[F_\theta(u)\cdot\big(\partial_z\mathfrak{K}_0\big)\big(z+s\delta\,F_\theta(u),v\big)\big]\Big)
	$$
	belongs to $S^0_{0,0}(\R^d\times\R^d)$
	with seminorms bounded by $C\theta^{-1}$ uniformly for $(\delta,\theta)\in(0,1]^2$.
\end{lemma}
\begin{proof}
	We can write that: \begin{align*}
		a''_{\delta,\theta}(z,\eta)&=(2\pi)^d\int_{\R^d}dv\,e^{-i<\eta,v>}\,\mathfrak{K}_{\delta,\theta}''(z,v)=\\
		&=\int_0^1ds\,\int_{\R^d}du\,\mW_\kappa(z-u)\,\Big(\big[F_\theta(u)\cdot(\partial_z\,a)\big(z+s\delta\,F_\theta(u),\eta\big)\big]\Big).
	\end{align*}
	As in Remark \ref{R-est-aFdelta}, we notice that
	\beq
	\nu_{n,m}(a'_{\delta,\theta})\,\leq\,\big(\underset{u\in\R^d}{\sup}\big|F_\theta(u)\big|\big)\,\underset{0\leq s\leq1}{\sup}\nu_{n+1,m}(a[F]_s)\leq\,M_F\,\theta^{-1}\,\underset{0\leq s\leq1}{\sup}\nu_{n+1,m}(a[F]_s)
	\eeq
\end{proof}
Putting the above results together we conclude that:
\beq
\Big|\big\langle\mathfrak{K}^\circ_\delta\,,\,(\overline{\phi}\otimes\phi)\circ\Upsilon^{-1}\big\rangle_{\mathscr{S}(\R^d\times\R^d)}\,-\,\big\langle\mW[\mathfrak{K}^\circ_\delta]\,,\,(\overline{\phi}\otimes\phi)\circ\Upsilon^{-1}\big\rangle_{\mathscr{S}(\R^d\times\R^d)}\Big|\,\leq\,C(F)\delta\theta^{-1}.
\eeq

Let us consider now the 'outer region' integrals:
\begin{align}
&\big\langle\mathfrak{K}^\bot_\delta\,,\,(\overline{\phi}\otimes\phi)\circ\Upsilon^{-1}\big\rangle_{\mathscr{S}(\R^d\times\R^d)}\,-\,\big\langle\mW[\mathfrak{K}^\bot_\delta]\,,\,(\overline{\phi}\otimes\phi)\circ\Upsilon^{-1}\big\rangle_{\mathscr{S}(\R^d\times\R^d)}=\\ \nonumber
&\hspace*{9cm}=\ \delta\mathcal{I}_1[\phi](\delta,\theta,\kappa)\,+\,\delta^2\mathcal{I}_2[\phi](\delta,\theta,\kappa)
\end{align}
where:
\begin{align}\nonumber
&\mathcal{I}_1[\phi](\delta,\theta,\kappa):=\int_{\R^d}du\int_{\R^d}dz\,\mW_\kappa(z-u)\int_{\R^d}dv\,\overline{\phi(z+v/2)}\phi(z-v/2)\,\big(\nabla_z\mathfrak{K}_0\big)\big(z+\delta F_\theta^\bot(z),v\big)\,\times\\ \nonumber
&\hspace*{0.2cm}\times\,\Big[\big((z-u)\cdot\nabla F_\theta^\bot\big)(z)+(1/2)\int_0^1ds\,(1-s)\big((z-u)\otimes(z-u)(\nabla\otimes\nabla)\,F_\theta^\bot\big)\big(z+s(u-z)\big)\Big]
\end{align}
\begin{align}\nonumber
\mathcal{I}_2[\phi](\delta,\theta,\kappa)&:=\,(1/2)\int_{\R^d}du\int_{\R^d}dz\,\mW_\kappa(z-u)\int_{\R^d}dv\,\overline{\phi(z+v/2)}\phi(z-v/2)\,\times\\ \nonumber
&\hspace*{1cm}\times\,\int_0^1(1-s)ds\big((\nabla_z\otimes\nabla_z)\mathfrak{K}_0\big)\big(z+\delta F_\theta^\bot(z)+s\delta(F_\theta^\bot(u)-F_\theta^\bot(z)),v\big)\,\times\\ \nonumber
&\hspace*{1cm}\times\,\int_0^1dt\,\big((z-u)\cdot\nabla F_\theta^\bot\big)(z+t(u-z))\,\int_0^1dt'\,\big((z-u)\cdot\nabla F_\theta^\bot\big)(z+t'(u-z))
\end{align}

We may conclude that:
\beq
\big|\mathcal{I}_1[\phi](\delta,\theta,\kappa)\big|\,\leq\,\kappa^{-2}\theta^{1+\mu}\|\phi\|_{L^2}^2,\qquad\big|\mathcal{I}_2[\phi](\delta,\theta,\kappa)\big|\,\leq\,\kappa^{-2}\|\phi\|_{L^2}^2.
\eeq
\end{proof}

The result of this second Proposition clearly implies that:
\beq\label{E-2}\ 
\big|\widetilde{\mathcal{E}_+}(\delta)\,-\,\mathcal{E}_+(\delta)\big|\,=\,\mathcal{O}\big(\delta/\theta+\delta\kappa^{-2}\theta^{1+\mu}+\delta^2\kappa^{-2}\big).
\eeq

{\  Taking into account \eqref{E-1} and \eqref{E-2}}, in order to finish the proof of Theorem \ref{T-B}, we only have to make the following choices for our scaling parameters:
\begin{itemize}
	\item $\theta=\delta^{1-\rho}$ for some $\rho\in(0,1)$, so that $\delta\theta^{-1}=\delta^\rho$;
	\item $\kappa^{2}=\delta^\rho$ so that $\delta\kappa^{-2}\theta^{1+\mu}=\delta^{(1-\rho)+(1-\rho)(1+\mu)}=\delta^{(2+\mu)(1-\rho)}$ and $\delta^2\kappa^{-2}=\delta^{(2-\rho)}$.
	\item imposing $\rho=(2+\mu)(1-\rho)\in(0,1)$ means taking $\rho=(1+\mu)/(2+\mu)$.
\end{itemize}
\bigskip

\noindent\textbf{Acknowledgements:} The authors' work on this material was supported by the Independent Research Fund Denmark–Natural Sciences, grant DFF–10.46540/2032-00005B.

\bigskip \bigskip
{\footnotesize
\begin{tabular}{ll}
(H. D.~Cornean)   
		&  \textsc{Department of Mathematical Sciences, Aalborg University} \\ 
        	&   Thomas Manns Vej 23, 9220 Aalborg, Denmark\\
        	&  {E-mail address}: \href{mailto:cornean@math.aau.dk}{\texttt{cornean@math.aau.dk}}\\[10pt]
       
         (R.~Purice)   
       &  \textsc{"Simion Stoilow" Institute of Mathematics of the Romanian Academy} \\ 
        	&   {Calea Grivi\c{t}ei 21, Bucure{\c s}ti, P.O. Box 1-764, 014700, Romania}\\
        	&  {E-mail address}: \href{mailto:Radu.Purice@imar.ro}{\texttt{Radu.Purice@imar.ro}}\\
\end{tabular}
}

\end{document}